\newtheorem*{maintheorem*}{Main Theorem}
\newtheorem{theorem}{Theorem}[section]
\newtheorem{prop}[theorem]{Proposition}
\newtheorem{lemma}[theorem]{Lemma}
\newtheorem{cor}[theorem]{Corollary}
\newtheorem{question}[theorem]{Question}
\theoremstyle{definition}
\newtheorem{example}[theorem]{Example}
\numberwithin{equation}{section}
\newcommand{\nn}{\mathbb{N}}
\newcommand{\qq}{\mathbb{Q}}
\newcommand{\rr}{\mathbb{R}}
\newcommand{\zz}{\mathbb{Z}}
\providecommand\ldb{\llbracket}
\providecommand\rdb{\rrbracket}
\newcommand{\gp}{\mathsf{gp}}
\keywords{monoid algebras, atomicity, atomic algebras, atomic Puiseux monoids}
\subjclass[2010]{Primary: 20M25, 13F15; Secondary: 13G05, 20M14}
\begin{document}
	
	\mbox{}
	\title{On the atomicity of monoid algebras}
	
	\author{Jim Coykendall}
	\address{Department of Mathematical Sciences\\Clemson University\\Clemson, SC 29634}
	\email{jcoyken@clemson.edu}
	
	\author{Felix Gotti}
	\address{Department of Mathematics\\UC Berkeley\\Berkeley, CA 94720 \newline \indent Department of Mathematics\\Harvard University\\Cambridge, MA 02138}
	\email{felixgotti@berkeley.edu}
	\email{felixgotti@harvard.edu}
	
	\date{\today}
	
\begin{abstract}
	Let $M$ be a commutative cancellative monoid, and let $R$ be an integral domain. The question of whether the monoid ring $R[x;M]$ is atomic provided that both $M$ and $R$ are atomic dates back to the 1980s. In 1993, Roitman gave a negative answer to the question for $M = \nn_0$: he constructed an atomic integral domain $R$ such that the polynomial ring $R[x]$ is not atomic. However, the question of whether a monoid algebra $F[x;M]$ over a field $F$ is atomic provided that $M$ is atomic has been open since then. Here we offer a negative answer to this question. First, we find for any infinite cardinal $\kappa$ a torsion-free atomic monoid $M$ of rank $\kappa$ satisfying that the monoid domain $R[x;M]$ is not atomic for any integral domain $R$. Then for every $n \ge 2$ and for each field $F$ of finite characteristic we exhibit a torsion-free atomic monoid of rank $n$ such that $F[x;M]$ is not atomic. Finally, we construct a torsion-free atomic monoid $M$ of rank $1$ such that $\zz_2[x;M]$ is not atomic.
\end{abstract}

\maketitle

\medskip
%%%%%%%%%%%%%%
\section{Introduction}

 In~\cite{rG84} R.~Gilmer offers a very comprehensive summary of the theory of commutative semigroup rings developed until the 1980s. Many algebraic properties of a commutative ring $R$ and a monoid $M$ implying the corresponding property on the monoid ring $R[x;M]$ had been studied by that time, but still the following fundamental question was stated by Gilmer as an open problem.

\begin{question} \cite[p.~189]{rG84} \label{prob:Gilmer transfer of atomicity}
	Let $M$ be a commutative cancellative monoid and let $R$ be an integral domain. Is the monoid domain $R[x;M]$ atomic provided that both $M$ and~$R$ are atomic?
\end{question}

In 1990, D.~Anderson et al. restated a special version of the above question in the context of polynomial rings, namely the case of $M = (\nn_0,+)$.

\begin{question} \cite[Question~1]{AAZ90} \label{conj:atomicity transfer for polynomial rings}
	If $R$ is an atomic integral domain, is the integral domain~$R[x]$ also atomic?
\end{question}

Question~\ref{conj:atomicity transfer for polynomial rings} was answered negatively by M.~Roitman in 1993. He constructed a class of atomic integral domains whose polynomial rings fail to be atomic~\cite{mR93}. In a similar direction, Roitman constructed examples of atomic integral domains whose corresponding power series rings fail to be atomic as well as examples of atomic power series rings over non-atomic integral domains~\cite{mR00}. Observe that Roitman's negative answer to Question~\ref{conj:atomicity transfer for polynomial rings} gives a striking answer to Question~\ref{prob:Gilmer transfer of atomicity}, showing that $R[x;M]$ can fail to be atomic even if one takes $M$ to be the simplest nontrivial atomic monoid, namely $M = (\nn_0, +)$. This naturally suggests the question of whether the atomicity of $M$ implies the atomicity of $R[x;M]$ provided that $R$ is the taken to be one of the simplest nontrivial atomic integral domains, a field. Clearly, this is another special version of Question~\ref{prob:Gilmer transfer of atomicity}.

\begin{question} \label{quest:main question}
	Let $F$ be a field. If $M$ is an atomic monoid, is the monoid algebra~$F[x;M]$ also atomic?
\end{question}

There are many known classes of atomic monoids whose monoid algebras (over any field) happen to be atomic. For instance, if a monoid $M$ satisfies the~ACCP (and, therefore, is atomic), then it is not hard to argue that for any field $F$ the monoid algebra $F[x;M]$ also satisfies the~ACCP. Therefore $F[x;M]$ inherits the atomicity of~$M$ as it is well known that integral domains satisfying the ACCP are atomic. In particular, every finitely generated monoid satisfies the ACCP and, thus, induces atomic monoid algebras. As BF-monoids also satisfy the ACCP~\cite[Corollary~1.3.3]{GH06}, one can use them to obtain many non-finitely generated atomic monoid algebras with rational (or even real) exponents; this is because submonoids of $(\rr_{\ge 0},+)$ not having $0$ as a limit point are BF-monoids~\cite[Proposition~4.5]{fG19}. Furthermore, an infinite class of atomic submonoids of $(\qq_{\ge 0},+)$ (which are not BF-monoids) with atomic monoid algebras was exhibited in~\cite[Theorem~5.4]{fG18}.

The purpose of this paper is to provide a negative answer for Question~\ref{quest:main question}. In Section~\ref{sec:non-atomic monoid domains} we find, for any infinite cardinal $\kappa$, a torsion-free atomic monoid $M$ of rank $\kappa$ satisfying that $R[x;M]$ is not atomic for any integral domain~$R$. Then, in Section~\ref{sec:non-atomic algebras of finite characteristic}, for every $n \ge 2$ and for each field $F$ of finite characteristic we exhibit a torsion-free atomic monoid of rank $n$ such that $F[x;M]$ is not atomic. Finally, in Section~\ref{sec:PM}, we construct a torsion-free atomic monoid $M$ of rank $1$ such that $\zz_2[x;M]$ is not atomic.
\medskip

%%%%%%%%%%%%%%
\section{Background}
\label{sec:background}

\subsection{General Notation} Throughout this paper, we set $\mathbb{N} := \{1,2, \dots\}$ and we set $\nn_0 := \nn \cup \{0\}$. In addition, the symbols $\zz$, $\qq$, and $\rr$ denote the sets of integers, rational numbers, and real numbers, respectively. For $S \subseteq \rr$ and $r \in \rr$, we set
\[
	S_{\ge r} := \{s \in S \mid s \ge r\}
\]
and, in a similar manner, we use the notation $S_{> r}$. For $x,y \in \rr$ such that $x \le y$, we let $\ldb x,y \rdb$ denote the discrete interval between $x$ and $y$, i.e.,
\[
	\ldb x,y \rdb := \{ n \in \zz \colon x \le n \le y \}.
\]
Information background and undefined terms related to atomic monoids and domains can be found in~\cite{rG84}.

\subsection{Atomic Monoids.} Every monoid here is tacitly assumed to be commutative and cancellative. Unless we specify otherwise, monoids are written additively. For a monoid~$M$, we set $M^\bullet := M  \setminus  \{0\}$, and we let $U(M)$ denote the set of invertible elements of $M$. The monoid $M$ is \emph{reduced} provided that $U(M) = \{0\}$. On the other hand, $M$ is \emph{torsion-free} provided that for all $x,y \in M$ the fact that $nx = ny$ for some $n \in \nn$ implies that $x = y$. For $x,y \in M$, we say that $x$ \emph{divides} $y$ in $M$ and write $x \mid_M y$ if $y = x + x'$ for some $x' \in M$. For $S \subseteq M$, we let $\langle S \rangle$ denote the smallest (under inclusion) submonoid of~$M$ containing~$S$. If $M = \langle S \rangle$, then we call $S$ a \emph{generating set} of $M$. Further basic definitions and concepts on commutative cancellative monoids can be found in~\cite[Chapter~2]{pG01}.

An element $a \in M \setminus U(M)$ is an \emph{atom} if for any $x,y \in M$ with $a = x+y$ either $x \in U(M)$ or $y \in U(M)$. The set of all atoms of $M$ is denoted by $\mathcal{A}(M)$, and $M$ is called \emph{atomic} if each element of $M \setminus U(M)$ can be written as a sum of atoms. A subset $I$ of $M$ is an \emph{ideal} of $M$ if $I + M \subseteq I$. An ideal $I$ is \emph{principal} if $I = x + M$ for some $x \in M$, and $M$ satisfies the \emph{ascending chain condition on principal ideals} (or \emph{ACCP}) provided that every increasing sequence of principal ideals of $M$ eventually stabilizes. It is well known that every monoid satisfying the ACCP must be atomic \cite[Proposition~1.1.4]{GH06}.

The \emph{Grothendieck group} $\gp(M)$ of a monoid $M$ is the abelian group (unique up to isomorphism) satisfying that any abelian group containing a homomorphic image of $M$ will also contain a homomorphic image of $\gp(M)$. The \emph{rank} of the monoid $M$ is defined to be the rank of the group $\gp(M)$, namely, the cardinality of a maximal integrally independent subset of $\gp(M)$. Equivalently, the rank of $M$ is the dimension of the $\qq$-vector space $\qq \otimes_{\zz} \gp(M)$. The monoid $M$ is called a \emph{totally ordered monoid} with respect to a total order~$\preceq$ on the underlying set of $M$ provided that $\preceq$ is compatible with the operation of $M$, i.e., for all $x,y,z \in M$ the inequality $x \preceq y$ implies that $x + z \preceq y + z$. If, in addition, $0 \preceq x$ for all $x \in M$, then we say that $M$ is \emph{positive} with respect to~$\preceq$.

Some of the monoid algebras exhibited in this paper are constructed using submonoids of $(\qq_{\ge 0}, +)$, which are called \emph{Puiseux monoids}. Clearly, each Puiseux monoid is totally ordered with respect to the standard order of $\qq$. Although all Puiseux monoids we shall be using here are atomic, we should notice that there exists a huge variety of non-atomic Puiseux monoids. For instance, for any prime $p$, the Puiseux monoid $M = \langle 1/p^n \mid n \in \nn_0 \rangle$ is not atomic as $\mathcal{A}(M)$ is empty. On the other hand, there are atomic Puiseux monoids that do not satisfy the ACCP, and such monoids are crucial in this paper.

\begin{example} \label{ex:Gram's monoid}
	Let $p_n$ denote the $n^{\text{th}}$ odd prime. The Puiseux monoid
	\[
		G = \bigg\langle \frac{1}{2^n \cdot p_n} \ \bigg{|} \ n \in \nn \bigg\rangle
	\]
	was introduced by A.~Grams in~\cite{aG74} to construct the first example of an atomic integral domain that does not satisfy the ACCP. It is not hard to check that $G$ is an atomic monoid. However, the increasing chain of principal ideals $\{1/2^n + G\}$ does not stabilize and, therefore, $G$ does not satisfy the ACCP. We call $G$ the \emph{Gram's monoid}.
\end{example}

\subsection{Monoid Algebras.} As usual, for a commutative ring $R$ with identity, $R^\times$ denotes the set of units of $R$. An integral domain is \emph{atomic} (resp., satisfies the \emph{ACCP}) if its multiplicative monoid is atomic (resp., satisfies the ACCP).

For a monoid~$M$ and an integral domain $R$, we let $R[x;M]$ denote the ring consisting of all functions $f \colon M \to R$ satisfying that $\{s \in M \mid f(s) \neq 0 \}$ is finite. We shall conveniently write a generic element $f \in R[x;M] \setminus \{0\}$ in one of the following ways:
\[
	f = \sum_{s \in M} f(s)x^s = \sum_{i=1}^n f(s_i)x^{s_i},
\]
where $s_1, \dots, s_n$ are precisely those $s \in M$ with $f(s) \neq 0$. The ring $R[x;M]$ is, indeed, an integral domain~\cite[Theorem~8.1]{rG84}, and the set of units of $R[x;M]$ is $R^\times$ \cite[Theorem~11.1]{rG84}. As a result, $R[x;M]$ is called the \emph{monoid domain} of $M$ \emph{over} $R$ or, simply, a \emph{monoid domain}. When $R$ is a field we call $R[x;M]$ a \emph{monoid algebra}.

Let us assume now that $M$ is a totally ordered monoid with respect to a given order~$\preceq$. In this case, we write the elements $f \in R[x;M] \setminus \{0\}$ in \emph{canonical form}, that is $f = \alpha_1x^{q_1} + \dots + \alpha_k x^{q_k}$ with $\alpha_i \neq 0$ for $i = 1, \dots, k$ and $q_1 \succ \dots \succ q_k$. Clearly, there is only one way to write $f$ in canonical form. The element $q_1 \in M$, denoted by $\deg(f)$, is called the \emph{degree} of $f$. As for the case of rings of polynomials over integral domains, the identity
\[
	\deg(fg) = \deg(f) + \deg(g)
\]
holds for all $f, g \in R[x;M] \setminus \{0\}$. As it is customary for polynomials, we say that $f$ is a \emph{monomial} provided that $k = 1$.

The following two results, which we shall be using later, are well known and have rather straightforward proofs.

\begin{theorem} \label{thm:general known results} \hfill
	\begin{enumerate}
		\item Let $M$ be a monoid satisfying the ACCP. Then for any field $F$ the monoid algebra $F[x;M]$ satisfies the ACCP.
		\vspace{3pt}
		\item Let $R$ be an integral domain. If $R$ satisfies the ACCP, then $R$ is atomic.
	\end{enumerate}
\end{theorem}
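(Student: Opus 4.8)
For part (1), the plan is to transport an ascending chain of principal ideals of $F[x;M]$ down to one of $M$ by means of the degree function, and then invoke the ACCP of $M$. First I would fix a total order $\preceq$ on $M$ that is compatible with the operation and satisfies $0 \preceq s$ for every $s \in M$; such a positive order exists because the (reduced, torsion-free) monoids under consideration embed into a torsion-free Grothendieck group whose positive cone can be chosen to contain $M$. With $\preceq$ in hand the degree function $\deg \colon F[x;M] \setminus \{0\} \to M$ is available, and the additivity $\deg(fg) = \deg(f) + \deg(g)$ recorded earlier holds since $F$ is a domain. Now suppose toward a contradiction that $(f_1) \subsetneq (f_2) \subsetneq \cdots$ is a strictly increasing chain of principal ideals of $F[x;M]$, and write $f_n = g_n f_{n+1}$ with each $g_n$ a nonunit. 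Additivity of $\deg$ gives $\deg(f_n) = \deg(g_n) + \deg(f_{n+1})$, so $(\deg f_1) \subseteq (\deg f_2) \subseteq \cdots$ is an ascending chain of principal ideals in $M$. The ACCP of $M$ forces this chain to stabilize, whence $\deg(g_n) = 0$ for all large $n$. But under a positive order an element of degree $0$ has all its exponents equal to $0$, hence is a nonzero constant and therefore a unit of $F[x;M]$; this contradicts the choice of $g_n$ as a nonunit, and the ACCP for $F[x;M]$ follows.

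For part (2), I would run the standard infinite-descent argument inside the multiplicative monoid $R \setminus \{0\}$, which is commutative and cancellative, observing that the ACCP and the atomicity of $R$ coincide with those of this monoid. If $R$ were not atomic, the set $S$ of nonzero nonunits admitting no factorization into irreducibles would be nonempty. Any $a \in S$ is reducible, so $a = bc$ with $b$ and $c$ nonunits, and at least one of $b, c$ — say $b$ — must again lie in $S$, since otherwise $a$ itself would factor into irreducibles. Because $c$ is a nonunit, $(a) \subsetneq (b)$, and iterating this selection produces a strictly ascending chain of principal ideals, contradicting the ACCP of $R$. Equivalently, part (2) is just the instance of \cite[Proposition~1.1.4]{GH06} applied to the multiplicative monoid of $R$.

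I expect the only genuinely delicate point to be the setup for part (1): securing a compatible \emph{positive} total order on $M$, so that the degree function is defined and so that degree-zero elements are forced to be units. Once that order is fixed, both the passage from a divisibility chain in $F[x;M]$ to one in $M$ and the descent argument of part (2) are routine, the former reducing entirely to the additivity of $\deg$ and the latter to the failure of infinite proper descent under the ACCP.
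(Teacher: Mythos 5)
The paper never actually proves this theorem: it is stated as a pair of well-known facts with ``rather straightforward proofs,'' so there is no argument of the authors to compare yours against. Judged on its own merits, your proof is correct under the hypotheses the paper tacitly imposes whenever it discusses monoid algebras (namely that $M$ is torsion-free, so that $F[x;M]$ is a domain, and reduced, so that $F[x;M]^\times = F^\times$). Part (2) is exactly the standard descent argument, i.e.\ \cite[Proposition~1.1.4]{GH06} applied to the multiplicative monoid of $R$, and part (1) correctly converts a strictly ascending chain $(f_1) \subsetneq (f_2) \subsetneq \cdots$ in $F[x;M]$, via $\deg(f_n) = \deg(g_n) + \deg(f_{n+1})$, into an ascending chain of principal ideals $\deg(f_n) + M$ in $M$, whose stabilization together with positivity forces the cofactors $g_n$ to be nonzero constants, hence units.

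Two remarks on the point you yourself flag as delicate. First, the existence of a compatible \emph{positive} total order is not just a matter of ``choosing the positive cone'': it is the order-extension theorem for torsion-free abelian groups (a subsemigroup $P$ of a torsion-free abelian group with $P \cap (-P) = \{0\}$ lies inside the positive cone of some total order), and it genuinely requires $M$ to be reduced, since positivity of a compatible order forces $U(M) = \{0\}$. This restriction is not cosmetic: the statement is false for non-reduced monoids. The group $M = (\qq, +)$ is torsion-free and cancellative and satisfies the ACCP vacuously (its only principal ideal is $\qq$ itself), yet for any field $F$ the domain $F[x;\qq]$ contains the strictly ascending chain of principal ideals generated by the elements $x^{1/2^n} - 1$, because $x^{1/2^n} - 1 = \big( x^{1/2^{n+1}} - 1 \big)\big( x^{1/2^{n+1}} + 1 \big)$ and group algebras of totally orderable groups have only trivial units. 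Second, you can bypass positivity (and the extension theorem) entirely: take any compatible total order on $M$, which exists by \cite[Corollary~3.4]{rG84} since $M$ is torsion-free and cancellative, and run your argument simultaneously with the largest exponent $\deg(f)$ and the smallest exponent $\mathrm{ord}(f)$, both of which are additive on products. The two resulting chains of principal ideals in $M$ stabilize, giving $\deg(g_n) = \mathrm{ord}(g_n) = 0$ for all large $n$ (reducedness is what upgrades ``associate'' to ``equal'' here), so $g_n$ is a nonzero constant and the contradiction is the same. This variant uses only a result the paper already cites, which makes it slightly more economical than importing the positive-order existence theorem.
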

\bigskip

%%%%%%%%%%%%%%%%%%%%%%%%%
\section{Non-Atomic Monoid Domains}
\label{sec:non-atomic monoid domains}

Our goal in this section is to construct an atomic monoid $M$ with the property that for each integral domain $R$ the monoid domain $R[x;M]$ fails to be atomic.

\begin{prop} \label{prop:product of atomic monoids}
	For monoids $M$ and $N$ the following statements hold.
	\begin{enumerate}
		\item If $M$ and $N$ are atomic, then $M \times N$ is atomic.
		\vspace{3pt}
		\item If $M$ and $N$ satisfy the ACCP, then $M \times N$ satisfies the ACCP.
	\end{enumerate}
\end{prop}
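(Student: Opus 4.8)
The plan is to reduce both statements to the corresponding properties of the factors $M$ and $N$ by exploiting the fact that the algebraic structure of $M \times N$ decomposes coordinatewise. I would begin by recording the two elementary facts I intend to use repeatedly. First, $U(M \times N) = U(M) \times U(N)$, so that $(m,n)$ is a non-unit precisely when at least one of $m,n$ is a non-unit; this rests on the observation that in a cancellative commutative monoid a unit can split only into units. Second, divisibility in the product is coordinatewise, that is, $(m_1,n_1) \mid_{M \times N} (m_2,n_2)$ if and only if $m_1 \mid_M m_2$ and $n_1 \mid_N n_2$. Both follow immediately from the componentwise definition of the operation.

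For part (1), I would first observe that whenever $a \in \mathcal{A}(M)$ and $u \in U(N)$, the element $(a,u)$ is an atom of $M \times N$ (and symmetrically). This follows at once by writing any putative factorization of $(a,u)$ coordinatewise: the $M$-coordinate of one summand must be a unit since $a$ is an atom, while both $N$-coordinates are units since $u$ is, so that summand lies in $U(M) \times U(N)$. Granting this, let $(m,n) \in (M \times N) \setminus U(M \times N)$ and split into cases according to which coordinates are units. If both $m$ and $n$ are non-units, I would take atomic factorizations $m = a_1 + \dots + a_k$ in $M$ and $n = b_1 + \dots + b_\ell$ in $N$ and write $(m,n) = \sum_i (a_i,0) + \sum_j (0,b_j)$, which is a sum of atoms by the preliminary observation. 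If exactly one coordinate is a non-unit, say $m \notin U(M)$ and $n \in U(N)$, I would instead absorb the unit into the first atom, writing $(m,n) = (a_1,n) + (a_2,0) + \dots + (a_k,0)$; here $(a_1,n)$ is an atom, again by the observation, and the remaining summands are atoms as well. Since $(m,n)$ is a non-unit, at least one coordinate is a non-unit, so these cases are exhaustive.

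For part (2), I would use that principal ideals factor as $(a,b) + (M \times N) = (a + M) \times (b + N)$, so that an ascending chain of principal ideals $\{(a_k,b_k) + (M \times N)\}_{k \ge 1}$ projects onto the two ascending chains $\{a_k + M\}_{k \ge 1}$ and $\{b_k + N\}_{k \ge 1}$ in $M$ and $N$, respectively (the ascending condition on each being exactly the coordinatewise translation of divisibility recorded above). Applying the ACCP hypothesis to each factor yields an index past which both component chains are constant; taking the larger of the two indices and reassembling via the factorization of principal ideals shows that the original chain in $M \times N$ stabilizes.

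I do not anticipate a serious obstacle, as the content is bookkeeping around the unit group. The one point requiring care is the case in part (1) where a single coordinate is a unit: one must avoid the naive decomposition $(m,0) + (0,n)$, which in that situation produces a unit summand rather than two atoms, and instead fold the unit coordinate into one of the atoms. Verifying that such folded elements $(a,u)$ are genuinely atoms is precisely what the preliminary observation secures, so I would prove that observation carefully and let it carry the rest.
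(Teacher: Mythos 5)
Your proof is correct and follows essentially the same route as the paper's: coordinatewise atomic decompositions for part (1) and projection of the chain of principal ideals onto the two factors for part (2), which is identical to the paper's argument. In fact, your part (1) is slightly more complete than the paper's own proof: the paper writes $(r,s) = \sum_i (a_i,0) + \sum_j (0,b_j)$, which implicitly assumes both coordinates admit atomic decompositions and thus silently skips the case where exactly one coordinate is a nonzero unit (a case that matters only when $M$ or $N$ fails to be reduced, as all monoids actually used later in the paper are reduced). Your preliminary observation that $(a,u)$ is an atom whenever $a \in \mathcal{A}(M)$ and $u \in U(N)$, together with the device of folding the unit coordinate into a single atom, closes that small gap.
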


\begin{proof}
	To argue~(1), suppose that $M$ and $N$ are atomic. Clearly, $(a,0)$ and $(0,b)$ belong to $\mathcal{A}(M \times N)$ when $a \in \mathcal{A}(M)$ and $b \in \mathcal{A}(N)$. Therefore, for any atomic decompositions $r = \sum_{i=1}^k a_i$ and $s = \sum_{j=1}^\ell b_j$ of $r \in M$ and $s \in N$,
	\[
		(r,s) = \sum_{i=1}^k (a_i,0) + \sum_{j=1}^\ell (0,b_j)
	\]
	is an atomic decomposition of $(r,s)$ in $M \times N$. Hence $M \times N$ is atomic, and~(1) follows. To argue~(2), assume that $M$ and $N$ both satisfy the ACCP. Let $\{ (a_n, b_n) + M \times N\}$ be an increasing chain of principal ideals in $M \times N$. Then $\{a_n + M\}$ and $\{b_n + N\}$ are increasing chains of principal ideals in $M$ and $N$, respectively. As $\{a_n + M\}$ and $\{b_n + N\}$ stabilize, $\{(a_n, b_n) + M \times N\}$ must also stabilize. As a result, $M \times N$ satisfies the ACCP, which completes the proof.
\end{proof}

\begin{theorem} \label{thm:main result 0}
	There exists a reduced torsion-free atomic monoid $M$ with rank $\aleph_0$ such that for each integral domain $R$, the monoid domain $R[x;M]$ is not atomic.
\end{theorem}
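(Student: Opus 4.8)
The plan is to exhibit a single monoid $M$ that is reduced, torsion-free, atomic, and of rank $\aleph_0$, together with one element $f \in R[x;M]$ that fails to be a product of atoms \emph{simultaneously} for every integral domain $R$. I would realize $M$ as a submonoid of the strict positive cone of $\qq^{(\nn)} = \bigoplus_{n \ge 0}\qq$ (automatically torsion-free, and reduced as soon as we avoid nonzero invertibles), and I would take $f$ to be a binomial; after factoring off a monomial (which is a product of atoms once $M$ is atomic) we may assume $f = x^a - 1$ with $a \in M$. The engine driving non-atomicity is the characteristic-free identity $x^{2s}-1 = (x^s-1)(x^s+1)$: if $a$ is repeatedly halvable inside $M$, say $a_k := a/2^k \in M$ for all $k$, then for every $k$ we get a factorization into non-units $f = (x^{a_k}-1)\prod_{l=1}^{k}(x^{a_l}+1)$, in which the distinguished factor $x^{a_k}-1$ is itself reducible (halve once more). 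Crucially this refinement is valid in every characteristic and over every $R$.

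The decisive elementary reduction I would invoke is that $f$ is not a product of atoms as soon as \emph{every} non-unit divisor of $f$ in $R[x;M]$ is reducible, since an atom dividing $f$ would then contradict its own irreducibility. To control the divisors of $f$ I would grade $R[x;M]$ by the ambient group: for every homomorphism $\phi \colon \gp(M) \to \rr$ with $\phi(a) = 0$, the element $f$ is $\phi$-homogeneous, and in a domain graded by a torsion-free (hence orderable) group every divisor of a homogeneous element is homogeneous. Intersecting over all such $\phi$ confines the support of any divisor of $f$ to a single coset of the line $\qq a$. Thus the only atoms that could appear in a factorization of $f$ are those supported along the halving direction, and the problem reduces to understanding factorizations inside the rank-one ``slice'' carrying the tower $\{a_k\}$.

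With the divisors so confined, I would finish by ruling out any finite atomic factorization of $f$. Over coefficient rings where the surviving factors $x^{a_l}+1$ stay irreducible, this follows from leading-term and constant-term bookkeeping: each such atom has unit constant term and degree $a/2^{l}$, and no finite product of them can reproduce both the degree $a$ and the constant term $-1$ of $f$, while the difference factors $x^{a_k}-1$ are never atoms; over rings where those factors split further, the same confinement shows the slice has no atoms above $f$, so once more $f$ has no atom divisor. In either regime $f$ is not a product of atoms, whence $R[x;M]$ is not atomic and Theorem~\ref{thm:main result 0} follows.

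The hard part — and the reason the rank is forced to be infinite — is reconciling two competing demands on $M$: it must be \emph{atomic} (to be a legitimate instance of Question~\ref{quest:main question}), yet the atoms witnessing its atomicity must be powerless to reassemble $f$ over \emph{any} $R$. A bare halving tower $\{a/2^k\}$ is never atomic, and the natural Gram-type remedy (adjoining atoms as in Example~\ref{ex:Gram's monoid}) places the new atoms along the very line $\qq a$ to which the grading has confined the factorizations of $f$; for benign $R$ such as $R = \qq$ these adjoined atoms can then conspire to refactor $f$, collapsing the example. The route I would pursue is to carry out the atomization along infinitely many \emph{fresh, independent} coordinates — which is precisely what pushes the rank up to $\aleph_0$ — and then to verify, via the grading of the second paragraph, that these tilted atoms genuinely make $M$ atomic while remaining invisible to $f$. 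Making this simultaneous balance precise and checking it uniformly over all integral domains $R$ is where I expect the bulk of the work to lie; atomicity of the fully assembled monoid should then be harvested from its constituent pieces through Proposition~\ref{prop:product of atomic monoids}.
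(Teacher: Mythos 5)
Your proposal has the right architecture, and in outline it parallels the paper's own proof: choose a binomial whose exponents are infinitely divisible inside $M$, confine its divisors to the divisibility direction, and restore atomicity of $M$ by placing corrective atoms along infinitely many fresh coordinates (the paper's $M$ is generated by $c,\, s_n,\, t_n,\, a-nc-s_n,\, b-nc-t_n$ inside the lexicographically ordered free abelian group on these symbols, its element is $x^a+x^b$, and the extraction $x^{a-kc}+x^{b-kc} = x^{2c}\,(x^{a-(k+2)c}+x^{b-(k+2)c})$ plays the role of your halving identity). But as written there are genuine gaps. The principal one is that you never construct $M$: exhibiting a reduced, torsion-free, rank-$\aleph_0$ monoid that simultaneously (i) contains the tower $a/2^k$, (ii) is atomic, and (iii) satisfies $M \cap \qq a = \langle a/2^k \mid k \in \nn_0 \rangle$ --- without (iii) your slice analysis never starts --- is precisely the content of the theorem, and you explicitly defer all of it. Moreover, the tool you name for the atomicity, Proposition~\ref{prop:product of atomic monoids}, cannot supply it: products only transfer atomicity you already possess, and any product decomposition of $M$ would place the halving tower inside one factor, whose atomicity is then exactly the original problem; the tilted atoms by design prevent the tower from splitting off. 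In the paper the corresponding work is done by hand: one checks that the generating set is minimal (hence consists of atoms) and, in its Claim~2, carries out the coordinate bookkeeping that is the analogue of your condition (iii).

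The second gap is in the factorization endgame, which must be uniform in $R$. Your ``decisive elementary reduction'' --- that every non-unit divisor of $f = x^a-1$ is reducible --- is false over $R=\qq$: the divisor $x^{a/2}+1$ is irreducible there, since each of its refinements is a $2$-power cyclotomic $y^{2^m}+1$, irreducible over $\qq$; so atoms do divide $f$. What you actually need is that every \emph{finite} factorization of $f$ into irreducibles contains a reducible factor, and your two-case split (``the $x^{a_l}+1$ stay irreducible'' versus ``they split'') is not a dichotomy that can be run ring by ring: over $\rr$ the factor $x^{a/2}+1$ splits after two refinements (as $y^4+1$ does), while over a non-UFD domain the classification of divisors of $y^{2^m}-1$ that your degree/constant-term bookkeeping presupposes is unavailable. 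A uniform repair does exist: given $x^a-1=\prod_i g_i$ with each $g_i$ irreducible, your confinement (plus the observation that every factor must have nonzero constant term, forcing the relevant coset to be trivial) puts all $g_i$ in $R[y]$ with $y = x^{a/2^m}$ for some $m$; evaluating $\prod_i G_i(y) = y^{2^m}-1$ at $y=1$ in the domain $R$ yields an index $i_0$ with $(y-1)\mid G_{i_0}$, whence $g_{i_0}$ is divisible by $x^{a/2^m}-1=(x^{a/2^{m+1}}-1)(x^{a/2^{m+1}}+1)$ and is therefore a product of two non-units, a contradiction. This argument (or a substitute for it), together with the missing construction of $M$, is where the actual proof lives; what you have written is a viable plan, with a genuinely different engine than the paper's (difference-of-squares halving of $x^a-1$ versus extraction of monomial factors $x^{kc}$ from $x^a+x^b$), but not yet a proof.
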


\begin{proof}
	Consider the abelian group $G$ freely generated by the set
	\[
		\Omega := \{a,b,c,s_n,t_n \mid n \in \nn_0\}.
	\]
	For each element $g \in G$, one can write
	\[
		g = \sum_{\omega \in \Omega} \mathsf{v}_\omega(g)\omega,
	\]
	where $\mathsf{v}_\omega(g) \in \zz$ and $\mathsf{v}_\omega(g) = 0$ for all but finitely many elements $\omega \in \Omega$. Observe that $\mathsf{v}_\omega(g+h) = \mathsf{v}_\omega(g) + \mathsf{v}_\omega(h)$ for all $g,h \in G$. Clearly, there exists a unique total order $\preceq$ on $G$ satisfying the following conditions:
	\begin{enumerate}
		\item the sequences $\{s_n\}$ and $\{t_n\}$ are both decreasing;
		\vspace{3pt}
		\item $a \succ b \succ c \succ s_n \succ t_0$ for every $n \in \nn_0$;
		\vspace{3pt}
		\item $\preceq$ is lexicographic on $G$ with respect to the order already assigned to the elements of $\Omega$.
	\end{enumerate}
	From now on we treat $G$ as a totally ordered group with respect to the order~$\preceq$. Consider the submonoid $M$ of $G$ generated by the set
	\[
		A := \{ c, s_n, t_n, a - nc - s_n, b - nc - t_n \mid n \in \nn_0 \}.
	\]
	Notice that $a,b \in M$ and $s \succ 0$ for all $s \in A$. As a result, $M$ is a positive monoid with respect to $\preceq$ and, therefore, $M$ is reduced.
	
	To argue that $M$ is atomic it suffices to check that $A$ is a minimal set of generators of $M$. That $c \notin \langle A \setminus \{c\} \rangle$ follows from the fact that $c$ is the only element $s \in A$ with $\mathsf{v}_c(s) \succ 0$. Also, for each $n \in \nn$, $s_n$ is the only element $s \in A$ with $\mathsf{v}_{s_n}(s) \succ 0$ and, therefore, $s_n \notin \langle A \setminus \{s_n\} \rangle$. A similar argument shows that $t_n \notin \langle A \setminus \{t_n\} \rangle$ for any $n \in \nn_0$. In addition, for every $n \in \nn_0$, the element $a - nc - s_n$ is the only $s \in A$ satisfying that $\mathsf{v}_{s_n}(s) \prec 0$, and so $a - nc - s_n \notin \langle A \setminus \{a - nc - s_n\} \rangle$. In a similar way one finds that $b - nc - t_n \notin \langle A \setminus \{b - nc - t_n\} \rangle$ for any $n \in \nn_0$. Hence $A$ is a minimal set of generators of~$M$, which implies that $M$ is an atomic monoid with $\mathcal{A}(M) = A$.
	
	Let us check now that the rank of $M$ is $\aleph_0$. Since $a = nc + s_n + (a - nc - s_n)$, we have that $a \in M$. In a similar way, one can see that $b \in M$. Thus, $\Omega \subseteq M$, which implies that the free commutative monoid $F(\Omega)$ on $\Omega$ is a submonoid of $M$. On the other hand, it is clear that $G$ is the Grothendieck group $F(\Omega)$. Then $G = \gp(F(\Omega)) \subseteq \gp(M) \subseteq G$ and, therefore, $\gp(M) = G$. Hence the rank of $M$ is $\aleph_0$, as desired.
	
	Now let $R$ be an integral domain. We proceed to show that $x^a + x^b \in R[x;M]$ cannot be expressed as a product of irreducibles. Suppose, by way of contradiction, that $x^a + x^b = f g$ for some $f,g \in R[x;M]$ such that $f$ is an irreducible in $R[x;M]$ with $\mathsf{v}_a(\deg(f)) \succ 0$ and $g \notin R[x;M]^\times = R^\times$. Write
	\[
		f = \sum_{j=1}^n r_j x^{v_j} \quad \text{ and } \quad g = \sum_{j=1}^m h_j x^{w_j}
	\]
	in canonical forms. As $v_1 + w_1 = a \in A$, one finds that $\mathsf{v}_a(v_1) = 1$ and $\mathsf{v}_a(w_1) = 0$. In addition, for $j \in \ldb 1,m \rdb$ we have that $w_j \preceq w_1$ and so $\mathsf{v}_a(w_j) = 0$. Now $v_n + w_m = b$ implies that $\mathsf{v}_a(v_n) = 0$ and, therefore, there exists a smallest index $i$ with $\mathsf{v}_a(v_i) = 0$. As $\mathsf{v}_a(v_1) = 1$ we have that $i > 1$.
	\vspace{5pt}
	
	\noindent \textit{Claim 1.} $v_{i'} + w_j \neq v_i + w_1$ when $(i',j) \neq (i,1)$.
	\vspace{2pt}
	
	\noindent \textit{Proof of Claim 1.} First, note that if $i' > i$, then $v_{i'} \prec v_i$ and $w_j \preceq w_1$; this implies that $v_{i'} + w_j \prec v_i + w_1$. On the other hand, if $i' = i$, then $j > 1$ and so $w_j \prec w_1$; this also implies that $v_{i'} + w_j \prec v_i + w_1$. Lastly, if $i' < i$, then $\mathsf{v}_a(v_{i'}) > 0$ by the minimality of the index $i$. This implies that $\mathsf{v}_a(v_{i'} + w_j) > 0 = \mathsf{v}_a(v_i + w_1)$, and our claim follows.
	\vspace{3pt}
	%\noindent
	
	By Claim 1, we have that $x^{v_i + w_1}$ occurs in $x^a + x^b$. Because $\mathsf{v}_a(v_i + w_1) = 0$, the equality $x^{v_i + w_1} = x^b$ must hold and, as a consequence, $v_i + w_1 = v_n + w_m$. Thus, $m = 1$ and so it immediately follows that $g = x^s$ for some $s \in M$.
	\vspace{5pt}
	
	\noindent \textit{Claim 2.} $s \in \langle c \rangle$.
	\vspace{2pt}
	
	\noindent \textit{Proof of Claim 2.} It is clear that $s$ is a common divisor of $a$ and $b$ in $M$. Take $a', b' \in M$ such that $a = a' + s$ and $b = b' + s$. From $a = a' + s$, one obtains that $0 \le \max \{ \mathsf{v}_b(a'), \mathsf{v}_b(s) \} \le \mathsf{v}_b(a) = 0$, and so $\mathsf{v}_b(a') = \mathsf{v}_b(s) = 0$. This, in turn, implies that $\mathsf{v}_{t_n}(a') \ge 0$ and $\mathsf{v}_{t_n}(s) \ge 0$ for every $n \in \nn_0$. Thus, $\mathsf{v}_{t_n}(s) = 0$ for every $n \in \nn_0$. In a similar manner one can verify that $\mathsf{v}_a(s) = \mathsf{v}_{s_n}(s) = 0$ for every $n \in \nn_0$. Thus, $s \in \langle c \rangle$, as claimed.
	
	Finally, suppose that $s = kc$ for some $k \in \nn$. Then $f = x^{a - kc} + x^{b - kc}$. For $c' := (a - (k+2)c - s_{k+2}) + s_{k+2} \in M$, one sees that $2c + c' = a - kc$. Thus, $2c \mid_M a - kc$. This implies that $x^{2c} \mid_{R[x;M]} x^{a - kc}$. Similarly, we have that $x^{2c} \mid_{R[x;M]} x^{b - kc}$. As a result, the non-unit non-irreducible element $x^{2c}$ divides $f$ in $R[x;M]$, which contradicts the fact that $f$ is irreducible in $R[x;M]$.
%	Then
%	\[
%		f = x^{a - kc} + x^{b - kc} = \big(x^{a - kc - s_k} + x^{b - kc - t_k} \big) x^{s_k + t_k},
%	\]
%	contradicting the fact that $f$ is an irreducible element of $R[x;M]$.
	Then $x^a + x^b$ cannot be expressed as a product of irreducibles in $R[x;M]$, which implies that $R[x;M]$ is not atomic.
\end{proof}

For any integral domain $R$ and monoids $M$ and $N$, there is a canonical ring isomorphism $F[x; M \times N] \cong F[y;M][z;N]$ induced by the assignment $x^{(a,b)} \mapsto y^a z^b$. To avoid having ordered pairs as exponents, we identify $R[x;M \times N]$ with $F[y;M][z;N]$ and write the elements of $F[x, M \times N]$  as polynomial expressions in two variables.

\begin{cor} \label{cor:main result 0}
	For any infinite cardinal $\kappa$, there exists a reduced torsion-free atomic monoid $M$ with rank $\kappa$ such that for any integral domain $R$ the monoid domain $R[x;M]$ is not atomic.
\end{cor}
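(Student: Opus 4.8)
The plan is to bootstrap from Theorem~\ref{thm:main result 0}, which already produces a reduced torsion-free atomic monoid $M_0$ of rank $\aleph_0$ with $R[x;M_0]$ non-atomic for every integral domain $R$. Since every infinite cardinal $\kappa$ satisfies $\kappa = \aleph_0 + \kappa$ (more precisely $\kappa = \max\{\aleph_0,\kappa\}$ as a cardinal sum), the idea is to take a direct product of $M_0$ with a suitable free commutative monoid that pushes the rank up to $\kappa$ while preserving atomicity, and then to transfer the non-atomicity of $R[x;M_0]$ through the identification $R[x;M_0 \times N] \cong R[x;M_0][z;N]$ described in the paragraph preceding the corollary.

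Concretely, I would let $N = F(\Lambda)$ be the free commutative monoid on an index set $\Lambda$ of cardinality $\kappa$, and set $M := M_0 \times N$. First I would verify the structural properties. The monoid $N$ is free, hence reduced, torsion-free, and atomic (indeed it satisfies the ACCP), so by Proposition~\ref{prop:product of atomic monoids}(1) the product $M = M_0 \times N$ is atomic; it is clearly reduced and torsion-free since both factors are. For the rank, I would use $\gp(M) \cong \gp(M_0) \oplus \gp(N)$, so that the rank of $M$ is $\aleph_0 + \kappa = \kappa$ because $\kappa$ is infinite. This establishes that $M$ has all the desired monoid-theoretic properties.

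The heart of the argument is showing $R[x;M]$ is not atomic for every integral domain $R$. Using the identification $R[x;M] = R[x;M_0][z;N]$, I would observe that $S := R[x;M_0]$ is itself an integral domain, and that $R[x;M] \cong S[z;N]$ is the monoid domain of the free monoid $N$ over $S$, i.e.\ a polynomial ring over $S$ in the (possibly infinitely many) variables indexed by $\Lambda$. The key point is that the element $x^a + x^b \in S$, which Theorem~\ref{thm:main result 0} shows is not a product of irreducibles in $S$, remains non-factorable into irreducibles when viewed inside the larger ring $S[z;N]$. To see this I would argue that the units of $S[z;N]$ are exactly $S^\times = R^\times$, that any irreducible of $S$ whose support avoids the $z$-variables remains irreducible in $S[z;N]$, and conversely that a factorization of $x^a + x^b$ in $S[z;N]$ must already live in the subring $S$ (since $x^a + x^b$ involves no $z$-variables and the degree map in the $z$-variables is additive, any factor must be $z$-degree $0$, hence lie in $S$). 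Therefore a hypothetical factorization of $x^a+x^b$ into irreducibles of $S[z;N]$ would descend to one in $S$, contradicting Theorem~\ref{thm:main result 0}. Hence $x^a+x^b$ is a non-unit of $R[x;M]$ with no factorization into irreducibles, so $R[x;M]$ is not atomic.

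The main obstacle I anticipate is the descent step: justifying that irreducibility and the impossibility of factorization genuinely transfer between $S$ and the larger ring $S[z;N]$. This requires a clean argument that factors of a $z$-free element are themselves $z$-free, which follows from degree additivity in the free variables together with positivity, but must be stated carefully when $\Lambda$ is infinite so that only finitely many variables occur in any given element. Once this localization-to-$S$ principle is in place, the contradiction with Theorem~\ref{thm:main result 0} is immediate and the corollary follows.
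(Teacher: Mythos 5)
Your construction is exactly the paper's: $M := M_0 \times M_\kappa$, where $M_\kappa$ is the free commutative monoid of rank $\kappa$, with atomicity from Proposition~\ref{prop:product of atomic monoids}(1), reducedness and torsion-freeness inherited factorwise, and rank $\aleph_0 + \kappa = \kappa$. The difference lies entirely in the last step. The paper disposes of the non-atomicity of $R[x;M] \cong R[y;M_0][z;M_\kappa]$ in one line by citing \cite[Proposition~1.4]{hK01}: since $M_\kappa$ is torsion-free and the coefficient domain $R[y;M_0]$ is not atomic, the monoid domain is not atomic. You instead prove this transfer by hand in the polynomial-ring case: writing $S := R[x;M_0]$, the units of $S[z;N]$ are exactly $S^\times = R^\times$, and additivity of total $z$-degree (applied within the finitely many $z$-variables occurring in any given factorization, which is how one handles infinite $\Lambda$) forces every factor of the $z$-free element $x^a + x^b$ to lie in $S$; an irreducible of $S[z;N]$ that lies in $S$ is then irreducible in $S$, since non-units of $S$ remain non-units upstairs, so a hypothetical factorization into irreducibles in $S[z;N]$ would descend to one in $S$, contradicting Theorem~\ref{thm:main result 0}. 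This argument is correct; it amounts to reproving the special (free-exponent-monoid) case of Kim's proposition. What your route buys is self-containedness and elementarity --- the corollary no longer leans on an external reference, at the price of using freeness of $M_\kappa$, which is all the corollary needs. What the citation buys the paper is brevity and generality: Kim's result covers arbitrary torsion-free exponent monoids, where no degree function exists and one must argue via a compatible total order instead. One small simplification available to you: the specific witness $x^a + x^b$ (extracted from the \emph{proof} of Theorem~\ref{thm:main result 0}) is not needed; the bare statement that $S$ is not atomic suffices, since by your descent argument any non-unit of $S$ with no factorization into irreducibles in $S$ has none in $S[z;N]$ either.
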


\begin{proof}
	By Theorem~\ref{thm:main result 0} there exists a reduced torsion-free atomic monoid $M'$ with rank~$\aleph_0$ such that for any integral domain $R$, the monoid domain $R[x;M']$ is not atomic. Let $M_\kappa$ be the free commutative monoid of rank $\kappa$, and set $M := M' \times M_\kappa$. Clearly, $M$ is reduced and torsion-free. It follows from Proposition~\ref{prop:product of atomic monoids}(1) that $M$ is atomic, and the equality $\gp(M) = \gp(M') \times \gp(M_\kappa)$ ensures that the rank of $M$ is~$\kappa$. Now let $R$ be an integral domain. Then $R[x;M] \cong R[y;M'][z;M_\kappa]$. Since $M_\kappa$ is a torsion-free monoid and the monoid domain $R[y;M']$ is not atomic, \cite[Proposition~1.4]{hK01} guarantees that $R[x;M]$ is not atomic.
\end{proof}

In the next two sections, we shall construct reduced torsion-free atomic monoids with monoid algebras that are not atomic over finite-characteristic fields.
\bigskip

%%%%%%%%%%%%%%%%%%%%%%%%%%%%%%%%%%%%%%%%
\section{Non-atomic Monoid Algebras of Finite Characteristic}
\label{sec:non-atomic algebras of finite characteristic}

In this section, we find, for any given field $F$ of finite characteristic, a rank-$2$ totally ordered atomic monoid $M$ such that $F[x;M]$ is not atomic.

Let $r$ and $m$ be integers with $m > 0$ and $\gcd(r,m) = 1$. Recall that the order of $r$ modulo $m$ is the smallest $n \in \mathbb{N}$ for which $r^n \equiv 1 \pmod{m}$, and that $r$ is a primitive root modulo $m$ if its order modulo $m$ equals $\phi(m)$, where $\phi$ is the Euler's totient function. It is well known that for any odd prime $p$ and positive integer $k$, there exists a primitive root modulo $p^k$.

\begin{lemma} \label{lem:irreducible polynomials in two variables}
	Let $F$ be a field of finite characteristic $p$ and $n \in \nn$ be such that $p \nmid n$. Then the polynomial $x^n+y^n+x^ny^n$ is irreducible in $F[x,y]$.
\end{lemma}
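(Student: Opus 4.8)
The plan is to regard $f := x^n + y^n + x^n y^n$ as a polynomial in the single variable $y$ with coefficients in the unique factorization domain $F[x]$, and then reduce to irreducibility over the field $F(x)$ via Gauss's lemma. Writing
\[
	f = (1 + x^n)\,y^n + x^n,
\]
I first observe that $f$ is primitive in $F[x][y]$: its only nonzero coefficients are $1 + x^n$ and $x^n$, and $\gcd(1+x^n, x^n) = 1$ in $F[x]$ because the two differ by $1$. Since $f$ has positive degree in $y$, Gauss's lemma reduces the claim to showing that $f$ is irreducible in $F(x)[y]$. Dividing by the unit $1 + x^n \in F(x)^\times$, this is in turn equivalent to the irreducibility of the binomial $y^n - a$ over $F(x)$, where $a := -\frac{x^n}{1+x^n}$.

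To analyze $a$, I would exploit a discrete valuation of $F(x)$ attached to an irreducible factor of $1 + x^n$. Since $p \nmid n$, the derivative $n x^{n-1}$ of $1 + x^n$ is nonzero and shares no root with $1+x^n$ (as $0$ is not a root of $1+x^n$), so $1 + x^n$ is separable; in particular every irreducible factor $P \in F[x]$ of $1 + x^n$ occurs with multiplicity one. Fixing such a $P$ and letting $v_P$ denote the associated valuation, I obtain $v_P(1 + x^n) = 1$ and $v_P(x^n) = 0$ (the latter because $P \neq x$), whence
\[
	v_P(a) = v_P(-x^n) - v_P(1+x^n) = -1.
\]

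With this single computation in hand, I would invoke the classical criterion for irreducibility of binomials (see, e.g., Lang's \emph{Algebra}): for $n \ge 2$, the polynomial $y^n - a$ is irreducible over a field $K$ precisely when $a$ is not a $q$-th power in $K$ for any prime $q \mid n$ and, in the case $4 \mid n$, when additionally $a \notin -4K^4$. Both conditions follow immediately from $v_P(a) = -1$: if $a = c^q$, then $-1 = v_P(a) = q\,v_P(c)$, impossible for $q \ge 2$; and if $4 \mid n$ and $a = -4c^4$, then either $\mathrm{char}\,F = 2$ (so that $4 \nmid n$, since $p \nmid n$, and the condition is vacuous) or $\mathrm{char}\,F \neq 2$, in which case $v_P(-4) = 0$ forces $-1 = v_P(a) \equiv 0 \pmod 4$, again impossible. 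The degenerate case $n = 1$ is immediate, as $f$ is then linear in $y$.

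The only genuine subtlety I anticipate is the bookkeeping around the prime $2$: the secondary hypothesis $4 \mid n$ in the binomial criterion must be dispatched, and one must notice that it cannot occur when $\mathrm{char}\,F = 2$, precisely because then $p = 2 \nmid n$. Everything else is routine once the valuation $v_P$ with $v_P(a) = -1$ has been produced. As an alternative to citing the binomial criterion, one could run a Newton-polygon argument for $v_P$: its single segment from $(0,0)$ to $(n,1)$ has slope $1/n$ of denominator $n$, forcing every root of $f$ to generate a ramified extension of degree at least $n$ over $F(x)$ and hence forcing $f$, a polynomial of $y$-degree $n$, to be irreducible.
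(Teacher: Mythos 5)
Your proof is correct, and it follows the paper's skeleton up to the decisive step: the same decomposition $f=(1+x^n)y^n+x^n$, the same primitivity observation, Gauss's lemma to pass to $F(x)[y]$, and the same Vahlen--Capelli binomial criterion (the paper cites Karpilovsky's \emph{Topics in Field Theory} where you cite Lang). Where you genuinely diverge is in verifying the non-power hypotheses of that criterion, which is exactly where the assumption $p \nmid n$ does its work. The paper argues by contradiction with explicit fraction bookkeeping: writing $a_x = c(h_1(x)/h_2(x))^q$ with $h_1, h_2$ coprime, clearing denominators to get $x^n h_2(x)^q = c(1+x^n)h_1(x)^q$, concluding that $h_2(x)^q$ is an associate of $1+x^n$, and then differentiating (this is where $p \nmid n$ enters) to force $h_2(x)=x^m$, a contradiction. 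You instead observe that $p \nmid n$ makes $1+x^n$ separable, hence squarefree, so any irreducible factor $P$ of it occurs to the first power, yielding a discrete valuation with $v_P(a)=-1$; since $-1$ is divisible neither by a prime $q \ge 2$ nor by $4$, both conditions of the criterion fail simultaneously (and you correctly note that the $4 \mid n$ condition is vacuous in characteristic $2$, a point the paper leaves implicit). Your route is shorter and pinpoints conceptually why $p \nmid n$ matters; the paper's is more elementary, using only UFD arithmetic and derivatives in $F[x]$ with no valuation theory. Your sketched Newton-polygon alternative would additionally dispense with citing the binomial criterion altogether, at the cost of invoking completions of discretely valued fields.
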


\begin{proof}
	Set $f(x,y) = y^n(1+x^n)+x^n$. Since $1 + x^n$ and $x^n$ are relatively primes in $F[x]$,  the polynomial $f(x,y)$ is primitive as a polynomial on $y$ over $F[x]$. By Gauss's Lemma, arguing that $f(x,y)$ is irreducible in $F[x][y]$ amounts to proving that it is irreducible in $F(x)[y]$, where $F(x)$ is the field of fractions of $F[x]$. We can write now
	\[
		f(x,y) = (1+x^n)y^n + x^n = (1+x^n) \bigg( y^n + \frac{x^n}{1+x^n} \bigg).
	\]
	Set $a_x = \frac{x^n}{1+x^n}$. Then $f(x,y)$ is irreducible in $F[x,y]$ if and only if $y^n + a_x$ is irreducible in $F(x)[y]$. Using~\cite[Theorem~8.1.6]{gK89}, one can guarantee the irreducibility of $y^n + a_x$ by verifying that $a_x \notin 4 F(x)^4$ when $4$ divides $n$ and that $-a_x \notin F(x)^q$ for any prime $q$ dividing $n$. To prove that these two conditions hold suppose, by way of contradiction, that $a_x \in c F(x)^q$, where $c \in \{-1,4\}$ and $q$ is either $4$ or a prime dividing $n$. Take $h_1(x), h_2(x) \in F[x] \setminus \{0\}$ such that $h_1(x)$ and $h_2(x)$ are relatively prime in $F[x]$ and $a_x = c \big( h_1(x)/h_2(x) \big)^q$. Then we have that
	\begin{equation} \label{eq:equality of rational fractions}
		x^n h_2(x)^q = c(1 + x^n)h_1(x)^q
	\end{equation}
	From~\eqref{eq:equality of rational fractions}, one can deduce that $h_2(x)^q$ and $1+x^n$ are associates in $F[x]$, and so there exists $\alpha \in F^\times$ such that $h_2(x)^q = \alpha(1+x^n)$. Taking derivatives in both sides of $h_2(x)^q = \alpha(1+x^n)$ and using that $p \nmid n$, we obtain that $h_2(x) = x^m$ for some $m \in \nn$, yielding that $c(1+x^n)h_1(x)^q = x^{n+mq}$. However, this contradicts that $1+x^n$ does not divide $x^{n+mq}$ in $F[x]$. Hence $f(x,y)$ is irreducible in $F[x,y]$.
%  However this is not possible because 
%	Then there exist $p(x), q(x) \in F[x] \setminus \{0\}$ satisfying that
%	\begin{equation} \label{eq:degrees}
%		x^n\big( q(x)^k - c p(x)^k \big) = c p(x)^k.
%	\end{equation}
%	Considering degrees in~\eqref{eq:degrees}, one obtains that $k=n$. So $p(x)$ and $x$ are associates in $F[x]$. As a result, $1 + x^n = c' q(x)^n$ for some $c' \in F$, which is not possible as $n$ is not divisible by the characteristic of $F$. Hence $f(x,y)$ is irreducible in $F[x,y]$.
\end{proof}
\medskip

Motivated by the Gram's monoid, in the next example we exhibit a family of Puiseux monoids indexed by prime numbers whose members are atomic but do not satisfy the ACCP.

\begin{example} \label{ex:like-Gram monoids}
	Let $\{p_n\}$ be a sequence consisting of all prime numbers ordered increasingly. For each prime $p$ consider the Puiseux monoid
	\[
		M_p := \bigg\langle \frac{1}{p^n p_n} \ \bigg{|} \ p_n \neq p \bigg\rangle.
	\]
	A very elementary argument of divisibility can be used to check that $M_p$ is atomic for each prime $p$. On the other hand, $M_p$ contains the strictly increasing sequence of principal ideals $\{1/p^n + M_p \}$ and, therefore, $M_p$ does not satisfy the ACCP. Notice that $M_2$ is precisely the Gram's monoid.
\end{example}

%For any field $F$ and any Puiseux monoids $M$ and $N$, there is a canonical $F$-algebra isomorphism $F[x; M \times N] \cong F[X;M][Y;N]$ induced by the assignment $x^{(a,b)} \mapsto X^aY^b$. To avoid having ordered pairs as exponents, we will identify $F[x;M \times N]$ with $F[X;M][Y;N]$ and write the elements of $F[x, M \times N]$  as polynomial expressions in two variables.

\begin{theorem} \label{thm:main result 1}
	For each field $F$ of finite characteristic $p$, there exists a torsion-free rank-$2$ atomic monoid $M$ such that the monoid algebra $F[x;M]$ is not atomic.
\end{theorem}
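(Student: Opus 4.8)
The plan is to realize $M$ as a product of two copies of the like-Gram monoid of Example~\ref{ex:like-Gram monoids} for the prime $p=\ch F$, and then to exhibit a single trinomial of $F[x;M]$ all of whose nonunit divisors are reducible. Precisely, I would set $M:=M_p\times M_p\subseteq\qq^2$. Each factor $M_p$ is atomic, so $M$ is atomic by Proposition~\ref{prop:product of atomic monoids}(1); it is clearly torsion-free, and since $\gp(M_p)=\qq$ one gets $\gp(M)=\qq^2$, so $M$ has rank $2$. The essential feature inherited from Example~\ref{ex:like-Gram monoids} is that $1/p^k\in M_p$ for every $k\ge 0$, so that both distinguished elements $(1,0)$ and $(0,1)$ of $M$ are divisible by $p^k$ in $M$ for all $k$.

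Next I would fix the witness $g:=x^{(1,0)}+x^{(0,1)}+x^{(1,1)}\in F[x;M]$. Writing $X:=x^{(1,0)}$ and $Y:=x^{(0,1)}$, this is the trinomial $X+Y+XY$, and for each $k\ge 0$ the element $g_k:=x^{(1/p^k,0)}+x^{(0,1/p^k)}+x^{(1/p^k,1/p^k)}$ lies in $F[x;M]$ and, by the Frobenius identity $(u+v+w)^p=u^p+v^p+w^p$ valid in characteristic $p$, satisfies $g_{k+1}^{\,p}=g_k$; in particular $g=g_0=g_k^{\,p^k}$ for every $k$. Since each $g_k$ is a nonunit (it is not a monomial and does not lie in $F^\times$), every $g_k$ is a proper $p$-th power, hence reducible in $F[x;M]$.

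To finish I would show that $g$ has no irreducible divisor at all, which forces $F[x;M]$ to be non-atomic, since an atomic $F[x;M]$ would express the nonzero nonunit $g$ as a product of atoms, each an irreducible divisor of $g$. Given any factorization $g=\pi\psi$ into nonunits, let $D\in\nn$ be a common denominator of the exponents of $\pi$ and $\psi$, so that $\pi,\psi$ lie in the polynomial ring $F[U,V]$ with $U:=x^{(1/D,0)}$ and $V:=x^{(0,1/D)}$. In this ring $g=U^D+V^D+U^DV^D=x^n+y^n+x^ny^n$ with $n=D$, and writing $D=p^k m$ with $p\nmid m$, Frobenius gives $g=T_m(U,V)^{p^k}$, where $T_m(U,V):=U^m+V^m+U^mV^m$. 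Here is the crux: by Lemma~\ref{lem:irreducible polynomials in two variables} the polynomial $T_m(U,V)$ is irreducible, hence prime in the UFD $F[U,V]$, so the only divisors of $g$ there are the powers $T_m(U,V)^j$ up to units. A direct computation (using $m/D=1/p^k$) identifies $T_m(U,V)$ with $g_k$, whence $\pi=\epsilon\,g_k^{\,j}$ for some $\epsilon\in F^\times$ and $j\ge 1$; as $g_k=g_{k+1}^{\,p}$ with $g_{k+1}$ a nonunit, $\pi$ is reducible. Thus every nonunit divisor of $g$ is reducible, and $F[x;M]$ is not atomic.

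I expect the crux to be exactly this appeal to Lemma~\ref{lem:irreducible polynomials in two variables}: it is the irreducibility of $x^n+y^n+x^ny^n$ for $p\nmid n$ that guarantees $g$ has the single ``prime direction'' $g_k$ among its divisors, with no stray irreducible factors of cyclotomic type surviving the passage to $p$-th roots. This is also what makes the trinomial, and hence rank $2$, the natural choice over an arbitrary field of characteristic $p$, in contrast to a binomial such as $x+1$, whose partial-root factorizations would otherwise manufacture irreducible divisors. The primitive-root remark preceding the lemma is not needed for this product construction, though it would become relevant if one instead insisted on a single indecomposable totally ordered rank-$2$ monoid.
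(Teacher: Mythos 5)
Your proposal is correct and is essentially the paper's own proof: the same monoid $M = M_p \times M_p$, the same witness $X + Y + XY$, the same passage to a polynomial ring $F[U,V]$ by clearing denominators $D = p^k m$ with $p \nmid m$, the same appeal to Lemma~\ref{lem:irreducible polynomials in two variables} together with unique factorization in $F[U,V]$ and the Frobenius identity, and the same conclusion that every non-unit divisor of the witness is a proper $p$-th power, hence reducible. The only slip is the assertion that $\gp(M_p) = \qq$ (it is not: each odd prime $p_n$ occurs at most to the first power in denominators of elements of $M_p$, so e.g.\ $1/p_n^2 \notin \gp(M_p)$), but this is harmless since any nonzero subgroup of $\qq$ has rank $1$, so $M$ still has rank $2$.
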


\begin{proof}
	Take $M := M_p \times M_p$, where $M_p$ is the atomic monoid parametrized by $p$ exhibited in Example~\ref{ex:like-Gram monoids}. It is clear that $M$ is torsion-free and has rank $2$. In addition, $M$ is atomic by Proposition~\ref{prop:product of atomic monoids}(1). Now we claim that each non-unit factor of $f := X + Y + XY$ in $F[x;M]$ has the form
	\[
		\big( X^{\frac{1}{p^k}} + Y^{\frac{1}{p^k}} + X^{\frac{1}{p^k}}Y^{\frac{1}{p^k}} \big)^t
	\]
	for some $k \in \nn_0$ and $t \in \nn$. To prove our claim, let $g \in F[x;M]$ be a non-unit factor of $f$, and take $h \in F[x;M]$ such that $f = g \, h$. Then there exist $k \in \nn_0$ and $a \in \nn$ with $p \nmid a$ such that $g(X^{ap^k}, Y^{ap^k})$ and $h(X^{ap^k}, Y^{ap^k})$ are both in the polynomial ring $F[X,Y]$. After changing variables, we obtain
	\[
		g(X^{ap^k},Y^{ap^k})h(X^{ap^k},Y^{ap^k}) = X^{ap^k} + Y^{ap^k} + X^{ap^k}Y^{ap^k} = (X^a + Y^a + X^aY^a)^{p^k}.
	\]
	By Lemma~\ref{lem:irreducible polynomials in two variables}, the polynomial $X^a + Y^a + X^aY^a$ is irreducible in the polynomial ring $F[X,Y]$. Since $F[X,Y]$ is a UFD, there exists $t \in \nn$ such that
	\[
		g\big( X^{ap^k}, Y^{ap^k} \big) = \big(X^a + Y^a + X^aY^a \big)^t.
	\]
	Going back to the original variables, we obtain $g(X,Y) = \big( X^{\frac{1}{p^k}}+ Y^{\frac{1}{p^k}} + X^{\frac{1}{p^k}} Y^{\frac{1}{p^k}} \big)^t$, which establishes our initial claim.

	Let us proceed to verify that $F[x;M]$ is not atomic. Notice that $f$ is not irreducible as $f = \big( X^\frac1p + Y^\frac1p + X^\frac1p Y^\frac1p \big)^p$. By the argument given in the previous paragraph, any factor $g$ of $f$ in a potential decomposition into irreducibles of $F[x;M]$ must be of the form $\big( X^{\frac{1}{p^k}} + Y^{\frac{1}{p^k}} + X^{\frac{1}{p^k}}Y^{\frac{1}{p^k}} \big)^t$ and, therefore,
	\begin{align} \label{eq:main theorem I}
		g = \big( X^{\frac{1}{p^{k+1}}} + Y^{\frac{1}{p^{k+1}}} + X^{\frac{1}{p^{k+1}}}Y^{\frac{1}{p^{k+1}}} \big)^{pt}.
	\end{align}
	Since $X^{\frac{1}{p^{k+1}}} + Y^{\frac{1}{p^{k+1}}} + X^{\frac{1}{p^{k+1}}}Y^{\frac{1}{p^{k+1}}} \in F[x;M]$, the equality~(\ref{eq:main theorem I}) would contradict that $g$ is an irreducible element of $F[x;M]$. Thus, the algebra $F[x;M]$ is not atomic.
\end{proof}

%It is clear that any submonoid $M$ of $(\rr,+)$ is totally ordered with respect to the natural order inherited from $\rr$.

\begin{cor} \label{cor:linearly ordered rank-2 examples}
	For each field $F$ of finite characteristic $p$ and each $r \in \nn_{\ge 2}$, there exists a totally ordered atomic monoid $M$ with rank $r$ such that $F[x;M]$ is not atomic.
\end{cor}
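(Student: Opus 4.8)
The plan is to reduce to the rank-$2$ construction of Theorem~\ref{thm:main result 1} by padding it with a free commutative monoid, exactly mimicking the passage from Theorem~\ref{thm:main result 0} to Corollary~\ref{cor:main result 0}. Fix a field $F$ of finite characteristic $p$ and an integer $r \ge 2$. I would let $M' := M_p \times M_p$ be the rank-$2$ atomic monoid produced in Theorem~\ref{thm:main result 1}, so that $F[y;M']$ is not atomic, and then set
\[
	M := M' \times \nn_0^{\,r-2},
\]
with the convention that $\nn_0^{\,0}$ is the trivial monoid, so that $M = M'$ when $r = 2$ (in which case the statement is already Theorem~\ref{thm:main result 1}).

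Next I would verify the structural properties of $M$. It is torsion-free as a finite product of torsion-free monoids, and it is atomic by Proposition~\ref{prop:product of atomic monoids}(1). The identity $\gp(M) = \gp(M') \times \zz^{\,r-2}$ shows that the rank of $M$ equals $2 + (r-2) = r$. To see that $M$ is totally ordered, I would use the fact that each factor $M_p$ is a Puiseux monoid, hence carries the total order inherited from $\qq$, while $\nn_0^{\,r-2}$ is totally ordered in the standard way; therefore $\gp(M)$ embeds into the group $\qq \times \qq \times \zz^{\,r-2}$ endowed with the lexicographic order, which is a totally ordered group, and restricting this order to $M$ makes $M$ a totally ordered monoid.

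It then remains to check that $F[x;M]$ is not atomic. Identifying $F[x;M]$ with $F[y;M'][z;\nn_0^{\,r-2}]$ as explained before Corollary~\ref{cor:main result 0}, I would observe that $\nn_0^{\,r-2}$ is a torsion-free monoid while the base ring $F[y;M']$ is not atomic, so \cite[Proposition~1.4]{hK01} guarantees that $F[x;M]$ is not atomic. The only step demanding a moment of care is producing the total order on $M$: Theorem~\ref{thm:main result 1} furnishes $M'$ only as an abstract torsion-free atomic monoid, and the point is to realize $\gp(M)$ concretely inside a lexicographically ordered group using the Puiseux structure of its factors. I expect this to be the main (and rather mild) obstacle, the remaining assertions being immediate transfers from the product construction together with the cited non-atomicity result.
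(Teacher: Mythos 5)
Your proposal is correct and follows essentially the same route as the paper: the same padding $M := M' \times \nn_0^{\,r-2}$, the same appeal to Proposition~\ref{prop:product of atomic monoids}(1) for atomicity and to the rank additivity of the product, and the same use of \cite[Proposition~1.4]{hK01} applied to $F[x;M] \cong F[y;M'][z;\nn_0^{r-2}]$ to conclude non-atomicity. The only divergence is the step you flagged as the ``main obstacle'': the paper dispenses with it instantly by citing \cite[Corollary~3.4]{rG84} (every torsion-free cancellative commutative monoid is totally orderable), whereas you construct the order explicitly by embedding $\gp(M)$ into $\qq \times \qq \times \zz^{\,r-2}$ with the lexicographic order via the concrete realization $M' = M_p \times M_p$ --- a correct, slightly more hands-on alternative that trades the general citation for the Puiseux structure of the factors.
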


\begin{proof}
	By Theorem~\ref{thm:main result 1}, there exists a torsion-free rank-$2$ atomic monoid $M'$ such that the monoid algebra $F[y;M']$ is not atomic. Now take $M := M' \times \nn_0^{r-2}$. Clearly, $M$ has rank $r$. On the other hand, Proposition~\ref{prop:product of atomic monoids}(1) ensures that $M$ is atomic. Also, $M$ is totally orderable because it is torsion-free and cancellative~\cite[Corollary~3.4]{rG84}. Finally, since $F[x;M] \cong F[y;M'][z;\nn_0^{r-2}]$, it follows from~\cite[Proposition~1.4]{hK01} and the fact that $F[y;M']$ is not atomic that $F[x;M]$ is not atomic.
\end{proof}
\medskip

%\begin{cor} \label{cor:linearly ordered rank-2 examples}
%	For each field $F$ of finite characteristic $p$, there exists a rank-$2$ totally ordered atomic monoid $M$ such that $F[x;M]$ is not atomic.
%\end{cor}
%
%\begin{proof}
%	Let $M_p$ be the monoid introduced in Example~\ref{ex:like-Gram monoids} corresponding to the prime $p$. Since $\pi$ is irrational, for any $a,a', b, b' \in \mathbb{Q}$ the fact that $a + \pi b = a' + \pi b'$ immediately implies that $a = a'$ and $b = b'$. Using this observation, one can easily verify that the map $\psi \colon M_p \times M_p \to M_p + \pi M_p$ defined by $\psi(a,b) = a + \pi b$ is a monoid isomorphism. On the other hand, the fact that $\{1, \pi\}$ is a linearly independent set of $\rr$ (seen as a $\qq$-vector space) implies that $\{1, \pi\}$ is also a linearly independent set of the $\qq$-vector space $\qq \otimes_{\zz} \gp(M_p + \pi M_p)$. Thus, $M_p + \pi M_p$ is a rank-$2$ totally ordered monoid. Because $M_p$ is atomic, Proposition~\ref{prop:product of atomic monoids}(1) ensures that $M_p + \pi M_p$ is also atomic. Since $M_p \times M_p$ and $M_p + \pi M_p$ are isomorphic monoids, \cite[Theorem~7.2(2)]{rG84} ensures that $F[x; M_p \times M_p]$ and $F[x; M_p + \pi M_p]$ are isomorphic $F$-algebras. Finally, it follows by Theorem~\ref{thm:main result 1} that $F[x; M_p + \pi M_p]$ is not an atomic domain.
%\end{proof}

%%%%%%%%%%%%%%%%%%%%%%%%%%%%%%%%%%%%%%%%%
\section{Non-atomic Monoid Algebras with Rational Exponents}
\label{sec:PM}

The purpose of this section is to find an atomic Puiseux monoid $M$ such that the algebra $\zz_2[x;M]$ is not atomic. Since every Puiseux monoid is totally ordered and has rank $1$, this result will complement Corollary~\ref{cor:linearly ordered rank-2 examples}.

For $q \in \qq_{> 0}$, let $a,b \in \nn$ be the unique positive integers such that $q = a/b$ and $\gcd(a,b)=1$. Then we denote $b$ by $\mathsf{d}(q)$. In addition, if $S \subseteq \qq_{> 0}$, then we denote the set $\{\mathsf{d}(s) \mid s \in S\}$ by $\mathsf{d}(S)$.

\begin{prop} \label{prop:an atomic Puisuex monoid}
	There exists an atomic Puiseux monoid $M$ satisfying the following two conditions:
	\begin{enumerate}
		%\item for each prime $p \notin \{2,3\}$ and $x \in M^\bullet$, $p \nmid \mathsf{d}(x)$;
		\item $M \subseteq \zz\big[ \frac{1}{2}, \frac{1}{3} \big]$;
		\vspace{3pt}
		\item $\frac{1}{2^n} \in M$ for each $n \in \mathbb{N}_0$.
	\end{enumerate}
\end{prop}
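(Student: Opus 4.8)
The plan is to build $M$ by hand as a Puiseux monoid with an explicit countable generating set inside $\zz[\frac12,\frac13]$, to read conditions~(1) and~(2) straight off the generators, and to concentrate the work on atomicity. Concretely, I would fix a rapidly increasing sequence $(k_n)_{n\ge 0}$ of positive integers and, for each $n$, choose three \emph{distinct} numerators $c_{n,1},c_{n,2},c_{n,3}$, all coprime to $6$, with $c_{n,1}+c_{n,2}+c_{n,3}=3^{k_n}$, and set
\[
  a_{n,i} := \frac{c_{n,i}}{2^{n}\,3^{k_n}} \qquad (n\ge 0,\ i\in\{1,2,3\}), \qquad M := \langle a_{n,i}\rangle .
\]
That such numerators exist once $k_n$ is large is an elementary Chicken-McNugget-type fact (every large enough integer is a non-negative combination of $5$ and $7$, and one has enough room to keep the three summands distinct and to dodge any prescribed finite set---freedom I will need later). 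Condition~(1) is then immediate, since each $a_{n,i}$ has denominator $2^n3^{k_n}$; and condition~(2) holds because $a_{n,1}+a_{n,2}+a_{n,3}=3^{k_n}/(2^n3^{k_n})=1/2^n$ for every $n\in\nn_0$.

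Everything now reduces to showing that $M$ is atomic, and for that I would prove that each generator $a_{n,i}$ is an atom: since $M$ is generated by the $a_{n,i}$, every element is a non-negative combination of them, so once they are atoms this is already an atomic factorization. Thus I must rule out any relation $a_{n,i}=\sum_{(m,l)}e_{m,l}\,a_{m,l}$ with non-negative integers $e_{m,l}$ of total weight at least $2$. Clearing denominators converts such a relation into one integer equation in the $c_{m,l}$ and powers of $2$ and $3$, which I would analyze with the $2$-adic and $3$-adic valuations at once: if $m^\ast$ is the largest level occurring, the $3$-adic valuation forces $3^{\,k_{m^\ast}-k_n}$ to divide the level-$m^\ast$ numerator sum, while the $2$-adic valuation controls how the powers of $2$ in the denominators can collapse. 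These are the levers I would use to push the representation down to level $n$ and finally to the single term $a_{n,i}$.

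The atom verification is the crux, and the obstacle is structural. In Gram's monoid (Example~\ref{ex:Gram's monoid}) each atom $1/(2^np_n)$ carries a prime $p_n$ dividing its denominator and no other, so a single $p_n$-adic valuation kills every other generator and at once precludes a nontrivial factorization---the usual route to atomicity when $0$ is a limit point. Here only $2$ and $3$ appear in denominators, and both are shared by infinitely many generators, so no valuation isolates a given atom and the classical argument is unavailable. Worse, the valuation skeleton above does not close by itself: a careless choice can make a generator coincide with a higher-level combination (already $c/(2^n3^k)=2\cdot c/(2^{n+1}3^k)$ if a numerator is reused), so the real work is to choose the data recursively---having fixed levels $0,\dots,n-1$, pick $k_n$ large and the $c_{n,i}$ so as to avoid the finitely many numerator relations that could let an earlier generator be rebuilt from level-$n$ generators or conversely. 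Making this bookkeeping airtight, so that the valuation descent can never terminate in a genuine relation, is the step I expect to be hardest. Finally I would note that $\{1/2^n+M\}$ is a strictly increasing chain of principal ideals, so $M$ fails the ACCP; this is exactly the feature that later makes $M$ a plausible seed for a non-atomic $\zz_2[x;M]$, though it is not needed for the statement itself.
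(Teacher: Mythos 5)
Your construction skeleton is sound and in fact closely parallels the paper's: the paper also takes, for a rapidly growing sequence $\{\ell_n\}$, generators with denominators $2^{2n}3^{\ell_n}$ and numerators coprime to $6$ that sum in pairs to $1/2^{n-1}$ (namely $a_n = \frac{2^n3^{\ell_n}-1}{2^{2n}3^{\ell_n}}$ and $b_n = \frac{2^n3^{\ell_n}+1}{2^{2n}3^{\ell_n}}$), so conditions (1) and (2) are read off the generators exactly as you do. But the entire content of the proposition is the atomicity of $M$, and that is precisely the step you leave unfinished --- as you acknowledge. Moreover, the levers you name would not close it as stated. First, pivoting on the largest level $m^\ast$ \emph{occurring} gives nothing: if the coefficients at that level are all equal, the level-$m^\ast$ numerator sum is automatically a multiple of $3^{k_{m^\ast}}$ (those terms collapse to a multiple of $1/2^{m^\ast}$), so the $3$-adic divisibility holds vacuously; one must pivot on the largest level whose coefficient vector is \emph{unbalanced}. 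Second, and this is the real failure point, divisibility of the numerator \emph{sum} by $3^{k_m - k_{m-1}}$ yields no contradiction when generators can be tiny: your setup allows $c_{m,1}=1$, i.e.\ the atom $1/(2^m3^{k_m})$, and a coefficient as large as $3^{k_m-k_{m-1}}$ on it contributes only $1/(2^m3^{k_{m-1}})$ to the element, so no size obstruction is violated. Third, the recursive ``dodge finitely many relations'' plan cannot be executed level by level, because a forbidden representation of $a_{n,i}$ may involve levels chosen \emph{after} stage $n$ (indeed arbitrarily many of them simultaneously), and a per-stage finite avoidance does not control those.

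The paper closes exactly these gaps by a sharper design rather than by genericity. Its two numerators at level $m$ are congruent to $-1$ and $+1$ modulo $3^{\ell_m}$, so the $3$-adic condition constrains the coefficient \emph{difference} $\alpha_m - \beta_m$ rather than an arbitrary weighted sum: in any purported relation, the terms with $\alpha_i = \beta_i$ collapse to powers of $1/2$, and at the largest index $m$ with $\alpha_m \neq \beta_m$ one gets $3^{\ell_m - \ell_{m-1}} \mid (\alpha_m - \beta_m)$. Simultaneously, both atoms at level $m$ are of size roughly $1/2^m$ (in particular larger than $1/2^{m+1}$), so, under the imposed growth condition $3^{\ell_m - \ell_{m-1}} > 2^{m+1}$, the unbalanced level contributes more than $3^{\ell_m - \ell_{m-1}}/2^{m+1} > 1$, contradicting that every $a_n, b_n$ is less than $1$. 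Both ingredients --- the congruence normalization of the numerators and the uniform lower bound on the sizes of the atoms at each level --- are absent from your proposal, and without them (or a worked-out substitute) the valuation descent you describe does not terminate in a contradiction, so the atomicity claim remains unproven.
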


\begin{proof}
	Let $\{\ell_n\}$ be a strictly increasing sequence of positive integers satisfying that
	\[
		3^{\ell_n - \ell_{n-1}} > 2^{n+1}
	\]
	for every $n \in \nn$. Now set $A = \{a_n, b_n \mid n \in \nn\}$, where $a_n := \frac{2^n3^{\ell_n} - 1}{2^{2n} 3^{\ell_n}}$ and $b_n := \frac{2^n3^{\ell_n} + 1}{2^{2n} 3^{\ell_n}}$. It is clear that $1 > b_n > a_n$ for every $n \in \nn$. In addition,
	\[
		a_n = \frac{1}{2^n} - \frac{1}{2^{2n}3^{\ell_n}} = \frac{1}{2^{n+1}} + \bigg( \frac{1}{2^{n+1}} - \frac{1}{2^{2n}3^{\ell_n}} \bigg) > \frac{1}{2^{n+1}} + \frac{1}{2^{2n+2}3^{\ell_{n+1}}} = b_{n+1}
	\]
	for every $n \in \nn$. Therefore the sequence $b_1, a_1, b_2, a_2, \dots$ is strictly decreasing and bounded from above by $1$. Consider now the Puiseux monoid $M = \langle A \rangle$. Clearly, $M$ satisfies condition~(1). On the other hand, $\frac{1}{2^n} = a_{n+1} + b_{n+1} \in M$ for every $n \in \nn_0$. Thus, $M$ also satisfies condition~(2).
	
	All we need to prove is that $M$ is atomic. It suffices to verify that $A$ is a minimal generating set of $M$~\cite[Proposition~1.1.7]{GH06}. Suppose, by way of contradiction, that this is not the case. Then there exists $n \in \nn$ such that $M = \langle A \setminus \{a_n\} \rangle$ or $M = \langle A \setminus \{b_n\} \rangle$.
	\vspace{3pt}
	
	\noindent CASE~1: $M = \langle A \setminus \{a_n\} \rangle$. In this case,
	\begin{equation} \label{eq:testing atoms I}
		a_n = \sum_{i=1}^N \alpha_i a_i + \sum_{i=1}^N \beta_i b_i
	\end{equation}
	for some $N \in \nn_{\ge n}$ and nonnegative integer coefficients $\alpha_i$'s and $\beta_i$'s ($i \in \ldb 1,N \rdb$) such that $\alpha_n = 0$ and either $\alpha_N > 0$ or $\beta_N > 0$. Since the sequence $b_1, a_1, b_2, a_2, \dots$ is strictly decreasing, $\alpha_i = \beta_i = 0$ for $i \in \ldb 1, n \rdb$. Notice that $\alpha_i = \beta_i$ cannot hold for all $i \in \ldb n+1, N \rdb$; otherwise,
	\[
		a_n = \sum_{i=n+1}^N \alpha_i a_i + \sum_{i=n+1}^N \alpha_i b_i= \sum_{i = n+1}^N \alpha_i \frac{1}{2^{i-1}},
	\]
	which is impossible because $3 \mid \mathsf{d}(a_n)$. Set $m = \max \big\{ i 
	\in \ldb n+1, N \rdb \mid \alpha_i \neq \beta_i \big\}$. First assume that $\alpha_m > \beta_m$. Then we can rewrite~(\ref{eq:testing atoms I}) as follows:
	\begin{equation} \label{eq:testing atoms III}
		a_n = (\alpha_m - \beta_m) \frac{2^m3^{\ell_m} - 1}{2^{2m} 3^{\ell_m}} + \sum_{i=m}^N \beta_i \frac{1}{2^{i-1}} + \! \! \sum_{i=n+1}^{m-1} \frac{\alpha_i(2^i3^{\ell_i} - 1) + \beta_i(2^i3^{\ell_i} + 1)}{2^{2i} 3^{\ell_i}}.
	\end{equation}
	After multiplying both sides of the equality~(\ref{eq:testing atoms III}) by $2^{2N}3^{\ell_m}$, one can easily see that each summand involved in such an equality except perhaps $2^{2N - 2m}(\alpha_m - \beta_m)(2^m 3^{\ell_m} - 1)$ is divisible by $3^{\ell_m -\ell_{m-1}}$. Therefore $3^{\ell_m - \ell_{m-1}}$ must also divide $\alpha_m - \beta_m$. Now since $a_m > b_{m+1} > \frac{1}{2^{m+1}}$, we find that
	\begin{equation} \label{eq:testing atoms IV}
		a_n \ge \alpha_m a_m \ge (\alpha_m - \beta_m) b_{m+1} \ge 3^{\ell_m - \ell_{m-1}} b_{m+1} > \frac{3^{\ell_m - \ell_{m-1}}}{2^{m+1}} > 1,
	\end{equation}
	which is a contradiction. In a similar way we arrive at a contradiction if we assume that $\beta_m > \alpha_m$.
	
	\vspace{3pt}
	\noindent CASE~2: $M = \langle A \setminus \{b_n\} \rangle$. In this case, it is not hard to see that
	\begin{equation} \label{eq:testing atoms II}
		b_n - \alpha_n a_n = \sum_{i=n+1}^N \alpha_i a_i + \sum_{i=n+1}^N \beta_i b_i
	\end{equation}
	for some nonnegative coefficients $\alpha_i$'s ($i \in \ldb n, N \rdb$) and $\beta_j$'s ($j \in \ldb n+1,N \rdb$) such that either $\alpha_N > 0$ or $\beta_N > 0$. Observe that $\alpha_n \in \{0,1\}$ as it is obvious that $2 a_n > b_n$. As before, there exists $m \in \ldb n+1, N \rdb$ such that $\alpha_m \neq \beta_m$, and we can assume that such $m$ is as large as possible. If $\alpha_m > \beta_m$, then
	\[
		b_n - \alpha_n a_n = (\alpha_m - \beta_m) \frac{2^m3^{\ell_m} - 1}{2^{2m} 3^{\ell_m}} + \sum_{i=m}^N \beta_i \frac{1}{2^{i-1}} + \! \! \sum_{i=n+1}^{m-1} \frac{\alpha_i(2^i3^{\ell_i} - 1) + \beta_i(2^i3^{\ell_i} + 1)}{2^{2i} 3^{\ell_i}}.
	\]
	Since $\mathsf{d}(b_n - \alpha_n a_n) \in \{2^{n-1} 3^{\ell_n}, 2^{2n} 3^{\ell_n}\}$, after multiplying the previous equation by $2^{2N}3^{\ell_m}$ we can see that $3^{\ell_m - \ell_{m-1}}$ divides $\alpha_m - \beta_m$. Now, an argument similar to that one given in CASE~1 can be used to obtain that $b_n > 1$, which is a contradiction. We can proceed in a similar manner to obtain a contradiction if we assume that $\alpha_m < \beta_m$. Hence we have proved that $A$ is a minimal generating set of $M$, which means that $M$ is atomic with $\mathcal{A}(M) = A$.
\end{proof}
\medskip

The following result will be used in the proof of Lemma~\ref{lem:irreducible polynomials in Z2[X]}.

\begin{lemma} \label{lem:primitive roots} \cite[page~179]{KR98}
	If $p$ is an odd prime and $r$ is a primitive root modulo $p^2$, then $r$ is a primitive root modulo $p^n$ for every $n \ge 2$.
\end{lemma}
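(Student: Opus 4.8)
The plan is to prove the equivalent statement that $\mathrm{ord}_{p^n}(r) = \phi(p^n) = p^{n-1}(p-1)$ for every $n \ge 2$. Writing $d_n := \mathrm{ord}_{p^n}(r)$, two elementary divisibility constraints cut the possibilities for $d_n$ down to a short list. On the one hand, $d_n \mid \phi(p^n) = p^{n-1}(p-1)$ by Euler's theorem. On the other hand, since $r$ is a primitive root modulo $p^2$ it is also a primitive root modulo $p$ (the reduction map $(\zz/p^2\zz)^\times \to (\zz/p\zz)^\times$ is a surjective homomorphism, so it carries a generator to a generator), whence $(p-1) = \mathrm{ord}_p(r) \mid d_n$. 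Therefore $d_n = p^s(p-1)$ for some $0 \le s \le n-1$, and it suffices to exclude the cases $s \le n-2$; equivalently, it suffices to show that $r^{p^{n-2}(p-1)} \not\equiv 1 \pmod{p^n}$.

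The crux is thus an explicit expansion. Because $r$ is a primitive root modulo $p^2$ we have $r^{p-1} \not\equiv 1 \pmod{p^2}$, while Fermat's little theorem gives $r^{p-1} \equiv 1 \pmod p$; hence we may write $r^{p-1} = 1 + kp$ with $p \nmid k$. I would then prove by induction on $j \ge 0$ that
\[
	r^{p^j(p-1)} = 1 + k_j\, p^{\,j+1} \quad \text{with} \quad p \nmid k_j .
\]
The inductive step is a binomial expansion: raising $1 + k_j p^{j+1}$ to the $p$-th power produces the leading correction $k_j\, p^{\,j+2}$, while every later term $\binom{p}{i}(k_j p^{j+1})^i$ with $i \ge 2$ is divisible by $p^{j+3}$ and is therefore negligible modulo $p^{j+3}$. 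This forces $k_{j+1} \equiv k_j \pmod p$, so the condition $p \nmid k_{j+1}$ is preserved.

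To finish, I would apply the displayed formula with $j = n-2$, obtaining $r^{p^{n-2}(p-1)} = 1 + k_{n-2}\, p^{\,n-1}$ with $p \nmid k_{n-2}$, so that $r^{p^{n-2}(p-1)} \not\equiv 1 \pmod{p^n}$. This is exactly the condition required in the first paragraph, forcing $s = n-1$ and hence $d_n = p^{n-1}(p-1) = \phi(p^n)$; consequently $r$ is a primitive root modulo $p^n$.

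The step I expect to require the most care is the binomial estimate in the inductive step, and in particular the term $\binom{p}{2}(k_j p^{j+1})^2$. This is where the hypothesis that $p$ is \emph{odd} is indispensable: for odd $p$ the coefficient $\binom{p}{2} = \tfrac{p(p-1)}{2}$ is an integer carrying an extra factor of $p$, so this term is divisible by $p^{2j+3}$, hence by $p^{j+3}$, and does no harm. For $p = 2$ that cushion disappears and the term can cancel the leading correction, which is precisely why the statement requires $p$ to be odd.
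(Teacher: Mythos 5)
Your proof is correct: the reduction to showing $r^{p^{n-2}(p-1)} \not\equiv 1 \pmod{p^n}$, the base case $r^{p-1} = 1 + kp$ with $p \nmid k$ (which is exactly what primitivity modulo $p^2$ gives), and the inductive binomial estimate---including the observation that $\binom{p}{2}$ contributes an extra factor of $p$ precisely because $p$ is odd---are all sound, and together they do force $\mathrm{ord}_{p^n}(r) = \phi(p^n)$. Note that the paper itself offers no proof of this lemma at all: it is quoted directly from the reference [KR98, p.~179], so your argument is not a variant of the paper's proof but a self-contained supply of the standard textbook one, and it fills that gap correctly.
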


For a given field $F$, we let $Q_n(x)$ denote the $n^{\text{th}}$ cyclotomic polynomial over $F$. The next lemma is proposed as an exercise in~\cite[Chapter~3]{LN86}. For the convenience of the reader, we provide a proof here.

\begin{lemma} \label{lem:irreducible polynomials in Z2[X]}
	For each $n \in \mathbb{N}$, the polynomial $x^{2 \cdot 3^n} + x^{3^n} + 1$ is irreducible in $\mathbb{Z}_2[x]$.
\end{lemma}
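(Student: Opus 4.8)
The plan is to identify the polynomial $x^{2 \cdot 3^n} + x^{3^n} + 1$ as a cyclotomic polynomial and then reduce its irreducibility to a statement about primitive roots. Setting $m = 3^n$, the elementary factorization $(x^m - 1)(x^{2m} + x^m + 1) = x^{3m} - 1$ gives, over $\zz_2$ (whose characteristic does not divide $3^{n+1}$),
\[
	x^{2 \cdot 3^n} + x^{3^n} + 1 = \frac{x^{3^{n+1}} - 1}{x^{3^n} - 1}.
\]
Since the only divisors of $3^{n+1}$ are the powers $3^0, 3^1, \dots, 3^{n+1}$, the factorization $x^N - 1 = \prod_{d \mid N} Q_d(x)$ applied with $N = 3^{n+1}$ and then with $N = 3^n$ makes the quotient telescope, leaving exactly $Q_{3^{n+1}}(x)$. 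Hence the polynomial in question is the $3^{n+1}$-th cyclotomic polynomial, first as an identity over $\zz$ and then reduced modulo $2$.

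Next I would invoke the standard description of how cyclotomic polynomials factor over finite fields (see~\cite[Chapter~3]{LN86}): if $\gcd(q,N) = 1$, then $Q_N(x)$ splits over $\mathbb{F}_q$ into irreducible factors all having the same degree $d$, namely the multiplicative order of $q$ modulo $N$; in particular, $Q_N(x)$ is irreducible over $\mathbb{F}_q$ precisely when $q$ is a primitive root modulo $N$. This is the hinge of the argument, and it reduces the lemma to verifying that $2$ is a primitive root modulo $3^{n+1}$ for every $n \in \nn$. As a consistency check, the degrees match: $\deg Q_{3^{n+1}} = \phi(3^{n+1}) = 2 \cdot 3^n$.

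Finally, I would settle the primitivity. A direct computation shows that $2$ has order $6 = \phi(9)$ modulo $9$, so $2$ is a primitive root modulo $3^2$. Lemma~\ref{lem:primitive roots} then upgrades this at once to the assertion that $2$ is a primitive root modulo $3^m$ for every $m \ge 2$, and applying this with $m = n+1 \ge 2$ (valid since $n \ge 1$) completes the proof.

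The main obstacle is really only the identification step together with correctly citing the finite-field factorization criterion; the cyclotomic bookkeeping and the single order computation modulo $9$ are routine, and Lemma~\ref{lem:primitive roots} absorbs all the remaining work of handling arbitrary $n$.
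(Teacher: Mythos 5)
Your proposal is correct and follows essentially the same route as the paper's own proof: identify $x^{2\cdot 3^n} + x^{3^n} + 1$ as the cyclotomic polynomial $Q_{3^{n+1}}(x)$ via the factorization of $x^{3^{n+1}} - 1$, then apply the finite-field irreducibility criterion (the paper cites \cite[Theorem~2.47(ii)]{LN86}) after checking that $2$ is a primitive root modulo $9$ and upgrading via Lemma~\ref{lem:primitive roots}. The only difference is the order of the steps, which is immaterial.
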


\begin{proof}
	One can verify that the order of $2$ modulo $9$ is $\phi(9) = 6$. Then $2$ is a primitive root modulo $3^2$. It follows now by Lemma~\ref{lem:primitive roots} that $2$ is also a primitive root modulo $3^k$ for every $k \ge 2$. Fix $n \in \nn$ and set $f_n(x) := x^{2 \cdot 3^n} + x^{3^n} + 1$. From
	\[
		x^{3^{n+1}} - 1 = \prod_{i=0}^{n+1} Q_{3^i}(x)
	\]
	one obtains that
	\[
		Q_{3^{n+1}}(x) = \frac{x^{3^{n+1}} - 1}{\prod_{i=0}^n Q_{3^i}(x)} = \frac{x^{3^{n+1}} - 1}{x^{3^n} - 1} = f_n(x).
	\]
	Therefore $f_n(x)$ is the ${3^{n+1}}$-th cyclotomic polynomial over $\zz_2$ (see \cite[Example~2.46]{LN86}). Since $2$ is a primitive root module $3^k$ for any $k \ge 2$, the least positive integer $d$ satisfying that $2^d  \equiv 1 \pmod{3^{n+1}}$ is $\phi(3^{n+1}) = 2 \cdot 3^n$. Hence~\cite[Theorem~2.47(ii)]{LN86} guarantees that the polynomial $f_n(x)$ is irreducible.
%	Set $f(x) = x^{2 \cdot 3^n} + x^{3^n} + 1$, and suppose that $f(x) = g(x)h(x)$ in $\mathbb{Z}_2[x]$, where $g(x)$ is irreducible. Since $x^{3^{n+1}} + 1 = \big( x^{3^n} + 1 \big) f(x)$, each primitive root of unity modulo $3^{n+1}$ must be a root of $f(x)$. Take $r$ to be a primitive root modulo $3^{n+1}$. As $r$ is a root of $f(x)$, either $g(r) = 0$ or $h(r) = 0$. Suppose, without loss of generality, that $g(r) = 0$. Let $m \in \{1, 2, \dots,3^{n+1}\}$ such that $3 \nmid m$. Since $2$ is a generator of the multiplicative group $(\mathbb{Z}/ 3^{n+1} \mathbb{Z})^\times$, there exists $k \in \mathbb{N}$ such that $2^k = m \pmod{3^{n+1}}$. Taking $\ell \in \nn_0$ so that $m = 2^k + 3^{n+1} \ell$, one obtains
%	\[
%		g\big(r^m \big) = g \big( r^{2^k + 3^{n+1} \ell} \big) = g \big( r^{2^k} {\big(r^{3^{n+1}}\big)^\ell} \big) = g\big( r^{2^k} \big) = g(r)^{2^k} = 0.
%	\]
%	Thus, the polynomial $g(x)$ contains at least $\phi(3^{n+1}) = 2 \cdot 3^n$ distinct roots. This implies that $\deg g(x) = \deg f(x)$. Hence $f(x) = g(x)$, and so $f(x)$ is irreducible over~$\zz_2$.
\end{proof}
%\medskip

%\begin{proof}
%	One can verify that the order of $2$ modulo $9$ is $\phi(9) = 6$. Then $2$ is a primitive root modulo $3^2$. It follows now by Lemma~\ref{lem:primitive roots} that $2$ is also a primitive root modulo $3^n$ for every $n \ge 2$, that is, $2 + 3^n \mathbb{Z}$ generates the multiplicative group of units of $\mathbb{Z}/3^n\mathbb{Z}$ for each $n \ge 2$.
%	
%	Set $f(x) = x^{2 \cdot 3^n} + x^{3^n} + 1$, and suppose that $f(x) = g(x)h(x)$ in $\mathbb{Z}_2[x]$, where $g(x)$ is irreducible. Since $x^{3^{n+1}} + 1 = \big( x^{3^n} + 1 \big) f(x)$, each primitive root of unity modulo $3^{n+1}$ must be a root of $f(x)$. Take $r$ to be a primitive root modulo $3^{n+1}$. As $r$ is a root of $f(x)$, either $g(r) = 0$ or $h(r) = 0$. Suppose, without loss of generality, that $g(r) = 0$. Let $m \in \{1, 2, \dots,3^{n+1}\}$ such that $3 \nmid m$. Since $2$ is a generator of the multiplicative group $(\mathbb{Z}/ 3^{n+1} \mathbb{Z})^\times$, there exists $k \in \mathbb{N}$ such that $2^k = m \pmod{3^{n+1}}$. Taking $\ell \in \nn_0$ so that $m = 2^k + 3^{n+1} \ell$, one obtains
%	\[
%		g\big(r^m \big) = g \big( r^{2^k + 3^{n+1} \ell} \big) = g \big( r^{2^k} {\big(r^{3^{n+1}}\big)^\ell} \big) = g\big( r^{2^k} \big) = g(r)^{2^k} = 0.
%	\]
%	Thus, the polynomial $g(x)$ contains at least $\phi(3^{n+1}) = 2 \cdot 3^n$ distinct roots. This implies that $\deg g(x) = \deg f(x)$. Hence $f(x) = g(x)$, and so $f(x)$ is irreducible over~$\zz_2$.
%\end{proof}
%\medskip

\begin{theorem} \label{thm:main result 2}
	There exists an atomic Puiseux monoid $M$ such that $\mathbb{Z}_2[x;M]$ is not atomic.
\end{theorem}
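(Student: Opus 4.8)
The plan is to mimic the strategy of Theorem~\ref{thm:main result 1}, now exploiting the Frobenius endomorphism in characteristic~$2$ together with the irreducibility of the cyclotomic trinomials supplied by Lemma~\ref{lem:irreducible polynomials in Z2[X]}. First I would take $M$ to be the atomic Puiseux monoid produced in Proposition~\ref{prop:an atomic Puisuex monoid}, so that $\frac{1}{2^n} \in M$ for every $n \in \nn_0$ and $M \subseteq \zz[\frac12,\frac13]$. Since $\{0,1,2\} \subseteq M$, the element $f := x^2 + x + 1$ lies in $\zz_2[x;M]$, and more generally $f_n := x^{2/2^n} + x^{1/2^n} + 1 \in \zz_2[x;M]$ for all $n \in \nn_0$ (using $\frac{1}{2^n} \in M$). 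Because squaring is a ring endomorphism over $\zz_2$ that annihilates every cross term, one checks at once that $f_{n+1}^2 = f_n$; in particular $f = f_0 = f_n^{\,2^n}$ for every $n$, and each $f_n$ is a non-unit, since its degree $1/2^{n-1}$ is positive and $\zz_2[x;M]^\times = \zz_2^\times = \{1\}$. Thus $f$ already factors nontrivially and can heuristically be ``halved'' indefinitely.

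The heart of the argument is the claim that every non-unit factor of $f$ in $\zz_2[x;M]$ has the form $f_i^{\,s}$ for some $i \in \nn_0$ and $s \in \nn$. To prove it, suppose $f = g\,h$; if $h$ is a unit then $g$ is an associate of $f_0$ and we are done, so assume $g,h$ are both non-units. The exponents occurring in $g$ and $h$ lie in $M \subseteq \zz[\frac12,\frac13]$, so there are $i,j \in \nn_0$ such that, after the substitution $x \mapsto x^{d}$ with $d = 2^i 3^j$, both $g$ and $h$ become genuine polynomials in $\zz_2[x]$. Applying this substitution to $f$ and invoking the Frobenius identity in characteristic~$2$ gives
\[
	g(x^{d})\,h(x^{d}) = x^{2d} + x^{d} + 1 = Q_3\big(x^{2^i 3^j}\big) = Q_3\big(x^{3^j}\big)^{2^i} = \big( x^{2\cdot 3^j} + x^{3^j} + 1 \big)^{2^i},
\]
where $Q_3(y) = y^2 + y + 1$. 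By Lemma~\ref{lem:irreducible polynomials in Z2[X]} (together with the standard irreducibility of $x^2+x+1$ for the case $j=0$) the polynomial $x^{2\cdot 3^j} + x^{3^j} + 1 = Q_{3^{j+1}}(x)$ is irreducible in $\zz_2[x]$. Since $\zz_2[x]$ is a UFD whose only unit is $1$, we conclude that $g(x^{d}) = \big( x^{2\cdot 3^j} + x^{3^j} + 1 \big)^{s}$ for some $0 \le s \le 2^i$. Reversing the substitution via $x \mapsto x^{1/d}$, the factors of $3^j$ cancel cleanly and we recover $g = \big( x^{2/2^i} + x^{1/2^i} + 1 \big)^{s} = f_i^{\,s}$, establishing the claim.

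With the claim in hand the conclusion is immediate. Any non-unit divisor of $f$ has the form $f_i^{\,s}$ with $s \ge 1$, and the identity $f_i = f_{i+1}^2$ exhibits $f_i^{\,s} = f_{i+1}^{\,2s}$ as a product of the non-units $f_{i+1}$. Hence no divisor $f_i^{\,s}$ of $f$ is irreducible. In particular $f$ admits no irreducible divisor at all, so it cannot be written as a product of irreducibles in $\zz_2[x;M]$, and therefore $\zz_2[x;M]$ is not atomic.

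I anticipate that the main obstacle is justifying the characterization of factors in the second paragraph, and in particular controlling the auxiliary factor $3^j$. A factorization of $f$ in $\zz_2[x;M]$ may a priori involve exponents whose denominators contain powers of~$3$, and clearing these denominators turns the irreducible trinomial $x^2+x+1$ into the larger trinomial $Q_{3^{j+1}}(x)$. What makes the argument work is precisely that \emph{all} of these trinomials are irreducible over $\zz_2$ --- which is exactly the content of Lemma~\ref{lem:irreducible polynomials in Z2[X]} --- so that unique factorization in $\zz_2[x]$ forces each factor to be a pure power of a single trinomial, whereupon the powers of~$3$ disappear on returning to the original exponents. The remaining care is the bookkeeping verifying that the substitution $x \mapsto x^{d}$ and its inverse interact correctly with the monoid-algebra structure and that every intermediate trinomial $f_n$ genuinely lies in $\zz_2[x;M]$, which is guaranteed by $\frac{1}{2^n} \in M$.
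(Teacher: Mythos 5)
Your proof is correct and takes essentially the same approach as the paper's: the same monoid from Proposition~\ref{prop:an atomic Puisuex monoid}, the same element $x^2+x+1$, the same denominator-clearing substitution reducing to the irreducibility of $x^{2\cdot 3^j}+x^{3^j}+1$ in $\zz_2[x]$ (Lemma~\ref{lem:irreducible polynomials in Z2[X]}) combined with unique factorization there, and the same Frobenius squaring identity to rule out irreducible factors. The only cosmetic differences are that you use a substitution exponent $2^i3^j$ where the paper takes $6^k$, and you conclude by noting that $f$ has no irreducible divisor at all rather than extracting a contradiction from a hypothetical factorization into irreducibles.
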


\begin{proof}
	Let $M$ be an atomic Puiseux monoid satisfying conditions~(1) and (2) of Proposition~\ref{prop:an atomic Puisuex monoid}. First, we will argue that each factor of the element $x^2 + x + 1$ in $\mathbb{Z}_2[x;M]$ has the form $\big( x^{2 \frac{1}{2^k}} + x^{\frac{1}{2^k}} + 1 \big)^t$ for some $k \in \mathbb{N}_0$ and $t \in \mathbb{N}$. First, note that because $M$ contains $\langle 1/2^k \mid k \in \mathbb{N}_0 \rangle$, it follows that $x^{2 \frac{1}{2^k}} + x^{\frac{1}{2^k}} + 1 \in \mathbb{Z}_2[x;M]$ for all $k \in \nn_0$. Now suppose that $f(x)$ is a factor of $x^2 + x + 1$ in $\mathbb{Z}_2[x;M]$, and take $g(x) \in \mathbb{Z}_2[x;M]$ such that $x^2 + x + 1 = f(x) g(x)$. Then there exists $k \in \mathbb{N}_0$ such that
	\[
		f\big( x^{6^k}\big) g\big( x^{6^k} \big) = \big( x^{6^k} \big)^2 + x^{6^k} + 1 = \big( x^{2 \cdot 3^k} + x^{3^k} + 1 \big)^{2^k}
	\]
	in the polynomial ring $\mathbb{Z}_2[x]$. By Lemma~\ref{lem:irreducible polynomials in Z2[X]}, the polynomial $x^{2 \cdot 3^k} + x^{3^k} + 1$ is irreducible in $\mathbb{Z}_2[x]$. Since $\mathbb{Z}_2[x]$ is a UFD, there exists $t \in \mathbb{N}$ such that
	\begin{align} \label{eq:factor description}
		f \big( x^{6^k}\big) = \big( x^{2 \cdot 3^k} + x^{3^k} + 1 \big)^t = \big( \big( x^{6^k} \big)^{2 \frac{1}{2^k}} + \big( x^{6^k} \big)^{\frac{1}{2^k}} + 1 \big)^t.
	\end{align}
	After changing variables in~(\ref{eq:factor description}), one obtains that $f(x) = \big( x^{2 \frac{1}{2^k}} + x^{\frac{1}{2^k}} + 1 )^t$. Thus, each factor of $x^2 + x + 1$ in $\mathbb{Z}_2[x;M]$ has the desired form.
	
	Now suppose, by way of contradiction, that the monoid domain $\mathbb{Z}_2[x;M]$ is atomic. Then $x^2 + x + 1 = \prod_{i=1}^n f_i(x)$ for some $n \in \mathbb{N}$ and irreducible elements $f_1(x), \dots, f_n(x)$ in $\mathbb{Z}_2[x;M]$. Since $f_1(x)$ is a factor of $x^2 + x + 1$, there exist $k \in \mathbb{N}_0$ and $t \in \mathbb{N}$ such that $f_1(x) = \big( x^{2 \frac{1}{2^k}} + x^{\frac{1}{2^k}} + 1 )^t$. As $f_1(x)$ is irreducible, $t=1$. Now the equality $f_1(x) = \big( x^{2 \frac{1}{2^{k+1}}} + x^{\frac{1}{2^{k+1}}} + 1 \big)^2$ contradicts the fact that $f_1(x)$ is irreducible in $\mathbb{Z}_2[x;M]$. Hence $\mathbb{Z}_2[x;M]$ is not atomic.
\end{proof}
\medskip

\section*{Acknowledgements}
	While working on this paper, the second author was partially supported by the UC Year Dissertation Fellowship. The authors would like to thank an anonymous referee for her/his careful review and very helpful suggestions.
\bigskip


\begin{thebibliography}{20}
	
	\bibitem{AAZ90} D.~D.~Anderson, D.~F.~Anderson, and M.~Zafrullah: \emph{Factorizations in integral domains}, J. Pure Appl. Algebra {\bf 69} (1990) 1--19.
	
	%\bibitem{lF73} L.~Fuchs: \emph{Infinite Abelian Groups II}, Academic Press, 1973.
	
	\bibitem{GH06} A.~Geroldinger and F.~Halter-Koch: \emph{Non-unique Factorizations: Algebraic, Combinatorial and Analytic Theory}, Pure and Applied Mathematics Vol. 278, Chapman \& Hall/CRC, Boca Raton, 2006.
	
	\bibitem{rG84} R.~Gilmer: \emph{Commutative Semigroup Rings}, Chicago Lectures in Mathematics, The University of Chicago Press, London, 1984.
	
	\bibitem{fG18} F.~Gotti: \emph{Atomic and antimatter semigroup algebras with rational exponents}. Preprint available on arXiv: [arXiv:1801.06779v2].
	
	\bibitem{fG19} F.~Gotti: \emph{Increasing positive monoids of ordered fields are FF-monoids}. J. Algebra {\bf 518} (2019) 40--56.
	
	\bibitem{aG74} A.~Grams: \emph{Atomic domains and the ascending chain condition for principal ideals}. Math. Proc. Cambridge Philos. Soc. {\bf 75} (1974) 321--329.
	
	\bibitem{pG01} P.~A.~Grillet: \emph{Commutative Semigroups}, Advances in Mathematics Vol. 2, Kluwer Academic Publishers, Boston, 2001.
	
	\bibitem{gK89} G. Karpilovsky: \emph{Topics in Field Theory}, North-Holland Mathematics Etudies, Vol. 155, North-Holland Publishing Co., Amsterdam, 1989.
	
	\bibitem{hK01} H. Kim: \emph{Factorization in monoid domains}, Comm. Algebra \textbf{29} (2001) 1853--1869.
	
	\bibitem{KR98} R.~Kumanduri and C.~Romero: \emph{Number Theory with Computer Applications}, Prentice Hall, New Jersey, 1998.
	
	\bibitem{LN86} R.~Lidl and H.~Niederreiter: \emph{Introduction to Finite Fields and Their Applications}, Cambridge University Press, Cambridge, 1986.
	
	\bibitem{mR00} M.~Roitman: \emph{On the atomic property for power series rings}, J. Pure Appl. Algebra {\bf 145} (2000) 309--319.
	
	\bibitem{mR93} M.~Roitman: \emph{Polynomial extensions of atomic domains}, J. Pure Appl. Algebra {\bf 87} (1993) 187--199.
\end{thebibliography}
\end{document}